\crefname{section}{§}{§§}
\newtheorem{thm}{Theorem}[section]
\newtheorem{lem}[thm]{Lemma}
\newtheorem{prop}[thm]{Proposition}
\newtheorem*{definition*}{Definition}
\newtheorem{cor}[thm]{Corollary}
\newtheorem{conj}[thm]{Conjecture}
\theoremstyle{remark}
       \newtheorem*{rmk}{Remark}
\theoremstyle{remark}
\newcommand{\llra}{\Longleftrightarrow}
\newcommand{\Z}{\mathbb{Z}}
\newcommand{\Q}{\mathbb{Q}}
\newcommand{\fr}{\frac}
\newcommand{\tu}{\textup}
\newcommand{\Mod}[1]{\ (\textup{mod}\ #1)}
\newcommand{\Gal}{Gal}
\title{Dynamical Galois Groups of Trinomials and Odoni's Conjecture}
\date{\today}
\author[Nicole R. Looper]{Nicole R. Looper}
\address{Department of Mathematics, Northwestern University; 2033 Sheridan Road, Evanston, IL 60208, USA}
\email{nlooper@math.northwestern.edu}
\begin{document} 

\maketitle

\begin{abstract} \normalsize We prove Odoni's conjecture in all prime degrees; namely, we prove that for every positive prime $p$, there exists a degree $p$ polynomial $\varphi\in\Z[x]$ with surjective arboreal Galois representation. We also show that Vojta's conjecture implies the existence of such a polynomial in many degrees $d$ which are not prime.

\end{abstract}
\renewcommand{\thefootnote}{}
\footnote{2010 \emph{Mathematics Subject Classification}: Primary: 11R32, 37P15. Secondary: 11D45, 14G05}

\section{Introduction}

Let $K$ be a number field, and fix $\varphi(x)\in K[x]$. Let $\varphi^n$ denote the $n$th iterate of $\varphi$. The pre-image tree associated to $\varphi$, with root 0, has as its set of vertices \[T_{\infty}=\bigsqcup_{n\ge 0} \varphi^{-n}(0).\] (Note that by convention, $\varphi^0(0)=\{0\}$.) Given two vertices $\alpha\in\varphi^{-n}(0)$ and $\beta\in \varphi^{-(n+1)}(0)$, an edge connects $\alpha,\beta$ if and only if $\varphi(\beta)=\alpha$. Assume the $\varphi^n$ are all separable, so that $0$ is not contained in the forward orbit of any critical point of $\varphi$. Then the group $\tu{Aut}(T_{\infty})$ of graph automorphisms of $T_{\infty}$ is isomorphic to the infinite iterated wreath product of the symmetric group on $d$ elements \cite{Nekra}. The absolute Galois group $\Gal(\overline{K}/K)$ acts naturally on $T_{\infty}$ by its action on the $\varphi^{-n}(0)$, inducing a tree automorphism; one thereby obtains a representation \[\rho: \Gal(\overline{K}/K)\to \tu{Aut}(T_{\infty}).\]  Its image will be denoted by $G_{K}(\varphi)$.

We investigate the question of when the representation $\rho$ is surjective and when the image $G_K(\varphi)$ has finite index in $\tu{Aut}(T_{\infty})$. (See Question 1.1 of \cite{Jones}.) For $d=2$, this question has been studied in some depth. Assuming the $abc$ conjecture over $\Q$ along with an irreducibility condition, a quadratic polynomial $\varphi\in \Z[x]$ with an infinite critical orbit has $[\tu{Aut}(T_{\infty}):G_{\Q}(\varphi)]$ finite \cite{Khoa,Jones}. For analogous conditions on $\varphi$ over a one-variable function field of characteristic zero, a finite index result holds unconditionally \cite{Wade}. 

In the higher degree case, far less is known.  In 1985, R.W.K. Odoni set forth the following conjecture: \begin{conj}[Odoni]{\label{conj:Odoni}} For every $d\ge 2$, there exists a polynomial $\varphi(x)\in\Z[x]$ of degree $d$ such that $G_{\Q}(\varphi)= \tu{Aut}(T_{\infty})$. \end{conj}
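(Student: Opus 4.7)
The plan is to produce, for each $d\ge 2$, an explicit one-parameter family of candidate polynomials---trinomials of the form $\varphi(x) = x^d + ax^c + b$ for some appropriate exponent $c<d$---and to show that within this family the subset of parameters yielding a surjective arboreal representation is nonempty. The proof that $G_{\Q}(\varphi)=\tu{Aut}(T_{\infty})$ will proceed by induction on the tree level $n$: assuming the image of $\rho$ surjects onto $\tu{Aut}(T_n)$, one shows that $\Gal(K_{n+1}/K_n)$ is the full direct product $S_d^{d^n}$, where $K_n$ denotes the splitting field of $\varphi^n$.

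The first level is handled separately: choose $a,b$ so that $\varphi$ is irreducible with $\Gal(\varphi/\Q)\cong S_d$. For trinomials this is classical, requiring irreducibility plus the existence of a prime of good reduction at which $\varphi$ has the right cycle structure. For the inductive step, the essential question is, for each root $\alpha\in \varphi^{-n}(0)$, whether the Galois group of $\varphi(x)-\alpha$ over $K_n$ is $S_d$ and whether the compositum of these splitting fields over $K_n$ is their internal direct product. This reduces to a local-global analysis: exhibit a collection of primes (depending on $a,b$) whose ramification in $K_{n+1}$ separates the individual extensions, and show that the relevant discriminants are pairwise independent modulo squares in $K_n^{*}$.

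For prime degrees $d=p$, one can arrange the trinomial so that its discriminant has a prime factorization preserved under iteration, and the primitivity of $S_p$ makes the intermediate subgroup analysis comparatively tractable---essentially only $A_p$ and a handful of small exceptions need to be ruled out. For composite $d$, however, $S_d$ contains many maximal imprimitive subgroups (notably $S_{d_1}\wr S_{d_2}$ for each factorization $d=d_1d_2$), and each of these must be excluded at every level. To handle them I would invoke Vojta's conjecture in the form of integral-points bounds on the ``coincidence'' subvarieties defined by the requirement that the Galois group of some $\varphi(x)-\alpha$ factor through a fixed maximal subgroup $H\subsetneq S_d$. Vojta's conjecture should force the exceptional parameter set to be thin enough that, outside a set of density zero, no such coincidence occurs at any level, and one then chooses $(a,b)$ avoiding every exceptional level simultaneously by a Baire-type argument.

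The main obstacle is precisely the composite-degree case, and within it the values of $d$ for which even Vojta's conjecture does not suffice---the abstract signals that conditionally on Vojta, the result is obtained only for ``many'' $d$, not all. The difficulty is twofold: the subgroup lattice of $S_d$ becomes intricate as the divisor structure of $d$ grows, so the list of maximal subgroups to be excluded is long; and the coincidence loci associated to some of these subgroups can contain higher-dimensional components whose integral points are not controlled by currently accessible cases of Vojta's conjecture. Closing the remaining gap will likely require either a substantial strengthening of the available Diophantine input or a genuinely new family of polynomials in which imprimitive subgroups are ruled out by elementary ramification or Newton-polygon arguments rather than by uniform Diophantine estimates.
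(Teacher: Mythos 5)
You should first note that the statement you were asked to prove is \emph{Odoni's conjecture itself}, which this paper does not prove in full generality and which remains open: the paper establishes it unconditionally for all prime $d$ (Theorem \ref{thm:primeOdoni}) and, conditionally on Vojta's conjecture, for composite $d$ satisfying a divisibility hypothesis (Theorem \ref{thm:abcOdoni}). Your proposal, honestly, is also not a proof --- it is a research plan that correctly identifies the composite-degree case as the obstruction --- so as a ``proof'' of the conjecture it has an unavoidable gap. Judged instead as a sketch of the paper's partial results, your outline matches the broad strategy (trinomial families, level-by-level induction, ramification producing transpositions, Jordan-type arguments in prime degree, Diophantine input for composite degree), but it misses the two devices that actually make the paper work. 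First, the trinomials are not generic: they are engineered so that $0$ is a critical point of multiplicity $d-2$ which is strictly preperiodic (resp.\ maps onto the other critical point), so that primitive prime divisors of the orbit of the remaining simple critical point are detected by an elementary congruence rather than by the ``pairwise independence of discriminants modulo squares'' analysis you gesture at; in prime degree the whole argument is then closed by a congruence mod $3$ forcing $\phi_{(p,k)}^n(a)$ out of the square class of $k$, with no Diophantine finiteness theorem needed. Second, and more importantly, your plan for composite $d$ --- using Vojta to bound integral points on ``coincidence subvarieties'' attached to each maximal imprimitive subgroup of $S_d$ --- is not what the paper does and would be very hard to make precise. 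The paper instead proves double transitivity \emph{unconditionally} by a Newton-polygon/totally-ramified-prime argument at a prime $q\mid d-1$ with $(d-1,v_q(d-1))=1$ (Theorem \ref{thm:2trans}), which kills all imprimitive subgroups at once; Vojta is invoked only to produce, uniformly in the parameter $c$, an odd-multiplicity primitive prime divisor of $\phi_c^n(a)$, hence a transposition, and doubly transitive plus a transposition forces $S_d$. This is precisely the ``elementary ramification or Newton-polygon argument ruling out imprimitive subgroups'' that you speculate might be needed at the end of your proposal --- the paper already supplies it, which is why its conditional result covers only those $d$ for which such a prime $q$ exists (or $d$ prime), not all $d$.
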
 (See Conjecture 2.2 of the survey by Jones \cite{Jones}, and Conjecture 7.5 of \cite{Odoni} for a more general formulation.) Odoni himself proved his conjecture in the case $d=2$, by showing that the polynomial $\varphi(x)=x^2-x+1$ yields $G_{\Q}(\varphi)=\tu{Aut}(T_{\infty})$ \cite{Odoni2}. He also proved that a certain `generic' degree $d\ge 2$ polynomial over a field $K$ of characteristic zero has surjective arboreal representation \cite{Odoni}. If $K$ is a number field, Hilbert's irreducibility theorem then implies that $\Gal(\varphi^n)/K$ surjects onto an appropriate truncated subtree of $T_{\infty}$ for all but a thin set $E_n$ of degree $d$ polynomials $\varphi$ defined over $K$. However, it is a priori possible that the union of the $E_n$ is the set of all degree $d$ polynomials over $K$; stated otherwise, it is possible that for every degree $d$ polynomial $\varphi(x)\in K[x]$, one obtains $G_{K}(\varphi)\ne \tu{Aut}(T_{\infty})$. 

In this article, we first show that Odoni's conjecture is true for all prime values of $d$. We accomplish this by examining, for each degree $d$, a family of polynomials having 0 as a strictly preperiodic critical point of multiplicity $d-2$, and a nonzero critical point of multiplicity one with infinite forward orbit. This type of critical orbit behavior is similar to that of the quadratic rational function appearing in Theorem 1.2 of \cite{Jones2}.

\begin{thm}{\label{thm:primeOdoni}} Let $\phi_{(p,k)}(x)=x^p+kpx^{p-1}-kp$, where $k\in\Z^+$, $p\ge 3$ is prime, and $p$ does not divide $k$. If $p\ge 5$, and $k\equiv 1\Mod{3}$, then $G_{\Q}(\phi_{(p,k)})=\tu{Aut}(T_{\infty})$. Moreover, $G_{\Q}(\phi_{(3,2)})=\tu{Aut}(T_{\infty})$. Hence Odoni's conjecture holds in all prime degrees. \end{thm}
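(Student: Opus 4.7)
The plan is to prove by induction on $n$ that $G_n := \tu{Gal}(\phi^n/\Q) \cong [S_p]^n$, where $\phi := \phi_{(p,k)}$. First I would record the critical structure: $\phi'(x) = px^{p-2}(x+k(p-1))$ gives critical points $0$ of multiplicity $p-2$ and $\gamma := -k(p-1)$ simple, and since $\phi(0) = -kp$ with $\phi(-kp) = -kp$, the point $0$ is strictly preperiodic to the fixed point $-kp$, so only the orbit $a_n := \phi^n(\gamma)$ contributes new arithmetic data. The factorization $\phi(x) + kp = x^{p-1}(x+kp)$ iterates to
\[
\phi^n(x) + kp = (x + kp)\prod_{i=0}^{n-1} \phi^i(x)^{p-1},
\]
and specializing at $\gamma$ yields the recursion $a_n + kp = k^p(p-1)^{p-1}\prod_{i=1}^{n-1} a_i^{p-1}$. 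Eisenstein at $p$ (using $p \nmid k$) gives irreducibility of $\phi$, initiating the induction.

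For the inductive step at level $n$, assuming $G_{n-1} \cong [S_p]^{n-1}$, I would verify (i) that $\phi^n$ is irreducible over $\Q$, and (ii) that $G_n$ is the full iterated wreath. For (i), I would propagate Eisenstein upward: the unique prime above $p$ in the splitting field $K_{n-1}$ of $\phi^{n-1}$ is totally ramified of index $p^{n-1}$, and a Newton-polygon analysis of $\phi(x) - \beta$ for each root $\beta$ of $\phi^{n-1}$ forces irreducibility. For (ii), standard analysis of prime-degree iterated wreath towers reduces wreath maximality, given that $G_{n-1}$ is full and that each fiber polynomial has full symmetric Galois group over $K_{n-1}$, to the single condition that $\text{disc}(\phi^n)$ is not a square in $\Q$ (the $G_{n-1}$-coinvariants of the sign module $(\Z/2)^{p^{n-1}}$ being just $\Z/2$). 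Using $\text{disc}(\phi(x) - \beta) = (-1)^{p(p-1)/2} p^p(-kp-\beta)^{p-2}(a_1 - \beta)$ together with $\phi^{n-1}(-kp) = -kp$ and $\phi^{n-1}(a_1) = a_n$, one computes
\[
\text{disc}(\phi^n) \equiv \pm k \cdot a_n \cdot \text{disc}(\phi^{n-1}) \pmod{(\Q^*)^2},
\]
so the whole program reduces to showing that the telescoping product $\pm k^n \prod_{i=1}^n a_i$ is never a square in $\Q$.

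The main obstacle is establishing this non-squareness unconditionally at every level $n$. This is where the hypothesis $k \equiv 1 \pmod 3$ (for $p \geq 5$) enters: using the recursion, one can show that $a_n$ occupies a fixed, nonzero residue class modulo $3$ for every $n$, which together with an auxiliary small-prime analysis should witness the non-squareness of the telescoping product. The restriction $p \geq 5$ ensures $3 \neq p$, so that the witnessing prime at $3$ does not collide with the Eisenstein ramification at $p$. The case $p = 3$, $k = 2$ is exceptional, since the mod-$3$ approach is unavailable; it must be handled separately via a direct computation at another small prime exploiting the specific structure of $\phi_{(3,2)}$. Carrying out the square-class analysis uniformly in $n$, both in the generic case and in the $p = 3$ exception, is the crux of the argument.
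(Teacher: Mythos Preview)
Your reduction of wreath maximality to the single condition that $\mathrm{disc}(\phi^n)$ is not a square in $\Q$ is not valid for $p\ge 3$. Even granting that $G_{n-1}$ is full and that each fiber $\phi(x)-\alpha_i$ has Galois group $S_p$ over $K_{n-1}$, there exist proper $G_{n-1}$-invariant subgroups $H\lneq S_p^{m}$ (with $m=p^{n-1}$) that surject onto every coordinate $S_p$ and still have surjective total sign. For instance, $H=\{(g_1,\dots,g_m):\mathrm{sgn}(g_1)=\cdots=\mathrm{sgn}(g_m)\}$ is permutation-invariant, of index $2^{m-1}$, surjects onto each factor, and since $m$ is odd its total sign $\prod_i\mathrm{sgn}(g_i)=\mathrm{sgn}(g_1)^m=\mathrm{sgn}(g_1)$ is onto $\{\pm 1\}$. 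Nothing in your square-class computation excludes $H_n=H$; the discriminant trick is special to degree $2$, where $S_2$ is abelian and the sign module is the whole story. You also never explain how to obtain the hypothesis that each fiber is $S_p$ over $K_{n-1}$ in the first place, and your proposed mod-$3$ witness for the telescoping product $\pm k^n\prod_{i\le n}a_i$ is inconclusive for even $n$, since this product is then $\equiv 1\pmod 3$.

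The paper proceeds by ramification rather than global square classes. Strict preperiodicity of $0$ forces every prime divisor of $a_n=\phi^n(\gamma)$ outside $\tu{Supp}(kp)$ to be a \emph{primitive} prime divisor, and the level-by-level condition $a_n\ne ky^2$ (not your cumulative product) guarantees one such prime $p_n$ of odd multiplicity. A trinomial computation shows $p_n$ has inertia of order $2$ in whichever fiber $M_i/\Q(\alpha_i)$ it ramifies in (giving $\Gal(M_i/\Q(\alpha_i))\cong S_p$ by Jordan), while primitivity makes $p_n$ unramified both in $K_{n-1}$ and in every other fiber $M_j$. This forces a transposition into $\Gal(K_n/\widehat{M}_i)$, a normal subgroup of $S_p$, hence equal to $S_p$; it is precisely this step that kills the correlated subgroups like the $H$ above and yields $H_n=S_p^m$. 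The mod-$3$ check is then simply $a_n\equiv -1\pmod 3$ versus $ky^2\in\{0,1\}\pmod 3$. For $(p,k)=(3,2)$ no local obstruction is available: the paper instead verifies via a \texttt{Magma} integral-point search that $y^2=2(x^3+6x^2-6)$ has no affine integral points, so $a_n=2y^2$ never occurs.
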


More generally, for each prime $p$, we produce a family of degree $p$ polynomials for which a finite index result holds.

\begin{thm}{\label{thm:primefinindex}} Let $\phi_{(p,k)}(x)=x^p+kpx^{p-1}-kp$, where $k\in\Z^+$, $p\ge 3$ is prime, and $p$ does not divide $k$. Suppose that the critical point $a=-k(p-1)$ has infinite forward orbit. Then $G_{\Q}(\phi_{(p,k)})$ has finite index in $\tu{Aut}(T_{\infty})$. \end{thm}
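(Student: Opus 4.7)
The plan is to show $[\tu{Aut}(T_\infty):G_\Q(\phi)] < \infty$ by proving that $[K_n:K_{n-1}] = (p!)^{p^{n-1}}$ for every sufficiently large $n$, where $K_n := \Q(\phi^{-n}(0))$. Since $\phi(x) \equiv x^p \pmod p$ and $\phi(0) = -kp$ with $v_p(-kp) = 1$, the polynomial $\phi$ is Eisenstein at $p$ and hence irreducible. A direct calculation using that $p-1$ is even shows $\phi(-kp) = -kp$, so $-kp$ is a fixed point of $\phi$ and $\phi^n(0) = -kp$ for all $n \geq 1$; this makes every iterate $\phi^n$ Eisenstein at $p$ and thus irreducible over $\Q$. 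An analogous local check shows that for each $\alpha \in \phi^{-(n-1)}(0)$, the polynomial $\phi(x)-\alpha$ is Eisenstein at the unique prime of $\Q_p(\alpha)$ above $p$, supplying transitivity on every fiber at every level.

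Using $\phi'(x) = px^{p-2}(x-a)$ with $a = -k(p-1)$, the standard resultant-based formula gives
\[
\tu{disc}(\phi(x)-\alpha) \;=\; \pm\, p^{p}\,(-kp-\alpha)^{p-2}(\phi(a)-\alpha).
\]
Since $p-2$ is odd, the square class of this discriminant is that of $p(-kp-\alpha)(\phi(a)-\alpha)$ up to sign. Taking the product over $\alpha \in \phi^{-(n-1)}(0)$ and invoking monicity of $\phi^{n-1}$, the factor from the critical point $0$ collapses to $\phi^{n-1}(-kp) = -kp$ (independent of $n$), while the factor from $a$ collapses to $\phi^{n-1}(\phi(a)) = \phi^n(a)$. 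Hence the only part of the discriminant square class that varies with $n$ is captured by the integer $\phi^n(a)$.

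The hypothesis that $a$ has infinite forward orbit places $\{\phi^n(a)\}$ in the setting of wandering integer orbits of a polynomial of degree $\geq 2$; for such sequences, a combination of height-growth estimates and the bounded-exponent bookkeeping available from writing $\phi^n(a) = \phi(\phi^{n-1}(a))$, in the spirit of Ingram--Silverman-type primitive prime divisor theorems, yields, for every $n$ past some threshold $n_0$, a prime $q \neq p$ dividing $\phi^n(a)$ to odd multiplicity and not dividing any $\phi^m(a)$ with $m < n$. Such a $q$ is unramified in $K_{n-1}$ yet ramifies in the extension cut out by $\sqrt{\phi(a)-\alpha}$ for a suitable $\alpha$, so it forces $\tu{disc}(\phi(x)-\alpha)$ to be a non-square in $K_{n-1}(\alpha)$ and, crucially, produces a class in $K_{n-1}^*/K_{n-1}^{*2}$ that is $\mathbb{F}_2$-independent from the classes contributed at lower levels. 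Combined with the fiberwise irreducibility from the Eisenstein step, this upgrades transitivity to the full $S_p$-action on every fiber and supplies enough linear disjointness to conclude $[K_n:K_{n-1}] = (p!)^{p^{n-1}}$ for all $n \geq n_0$; the finitely many early levels contribute only a bounded index factor, so finite index follows. The most delicate part of the argument is the step above: producing an odd-multiplicity primitive prime divisor of $\phi^n(a)$ at every large level, and then executing the ramification bookkeeping that converts such primes into genuinely $\mathbb{F}_2$-independent square classes across the tower.
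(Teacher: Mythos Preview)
Your overall framework is right—Eisenstein at $p$ gives irreducibility of all iterates and fiberwise transitivity, the discriminant formula identifies $\phi^n(a)$ as the controlling quantity, and one wants a suitable prime in $\phi^n(a)$ for large $n$. But two steps are genuine gaps.

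First, the production of an odd-multiplicity primitive prime divisor of $\phi^n(a)$ for all large $n$ is only gestured at. Ingram--Silverman-type results yield primitive prime divisors, not odd-multiplicity ones, and you cannot extract the parity condition from height growth alone. The paper's route is both simpler and unconditional: since $\phi^m(0)=-kp$ for all $m\ge 1$, \emph{every} prime divisor of $\phi^n(a)$ outside $\tu{Supp}(kp)$ is automatically primitive. Moreover $v_q(\phi^n(a))=v_q(k)$ for every prime $q\mid k$, and $p\nmid\phi^n(a)$, so the absence of such a prime is exactly the condition $\phi^n(a)=ky^2$. Siegel's theorem on the curve $ky^2=\phi(x)$ (genus $\ge 1$ since $p\ge 3$), together with the infinite orbit of $a$, then shows this fails for at most finitely many $n$.

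Second, your passage from ``discriminant non-square'' to ``full $S_p$ on each fiber'' is not valid for $p\ge 5$. A transitive subgroup of $S_p$ not contained in $A_p$ need not be $S_p$: for instance $AGL(1,p)$ has order $p(p-1)$, is transitive, and contains the odd $(p-1)$-cycle $x\mapsto gx$ for $g$ a primitive root. The $\mathbb{F}_2$-square-class bookkeeping you describe is the Stoll-type machinery tailored to $d=2$; here it only controls the sign character of each $S_p$ factor and cannot by itself force the full wreath product. The paper instead proves (its Theorem~2.1 and Proposition~2.2) that for a trinomial $x^d+Ax^s+B$ and a prime $\frak{p}\nmid AB$ dividing the discriminant, the inertia group at any prime above $\frak{p}$ is generated by a \emph{transposition}; then Jordan's theorem (a transitive subgroup of $S_p$ containing a transposition equals $S_p$) gives $\Gal(M_i/\Q(\alpha_i))\cong S_p$, and a further ramification argument (Proposition~2.3), tracking the same prime through the tower, propagates this to $\Gal(K_n/K_{n-1})\cong (S_p)^{p^{n-1}}$.
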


We then prove that assuming Vojta's conjecture (Conjecture \ref{conj:Vojta}), Odoni's conjecture holds for many other values of $d=\tu{deg}(\varphi)$. Here, we use a family of polynomials $\phi_{(d,c)}$ for each degree $d$, having 0 as a critical point of multiplicity $d-2$ that maps to the other finite critical point under $\phi_{(d,c)}$. 

\begin{thm}{\label{thm:abcOdoni}} Assume the Vojta (or Hall-Lang) conjecture holds. Let $\phi_{(d,c)}(x)=x^d-cdx^{d-1}+c(d-1)$, where $c\in\Z^+$, $d\ge 3$.  Suppose that either:
\begin{enumerate}[topsep=5pt, partopsep=5pt, itemsep=2pt]
\item[\textup{(1)}] $d$ is prime

\end{enumerate}
or 

\begin{enumerate}[topsep=5pt, partopsep=5pt, itemsep=2pt]

\item[\textup{(2)}] there exists a prime $q$ dividing $d-1$ such that $(d-1,v_q(d-1))=1$. 
\end{enumerate} Let $ \mathcal{B}_d$ be the set of $c\in\Z^+$ such that $\phi_{(d,c)}^n(x)$ is irreducible for all $n\ge 1$, and $(c,d-1)=1$. If $c\in  \mathcal{B}_d$, then $G_{\Q}(\phi_{(d,c)})$ has finite index in $\tu{Aut}(T_{\infty})$. Moreover, for all but finitely many $c\in  \mathcal{B}_d$, we have $G_{\Q}(\phi_{(d,c)})=\tu{Aut}(T_{\infty})$.\end{thm}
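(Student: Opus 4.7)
The plan is to apply the by-now-standard reduction of arboreal surjectivity to two pieces of data: irreducibility of every iterate (which is already encoded in the definition of $\mathcal{B}_d$), together with a statement that at each level $n$ the Galois group of $\phi^n_{(d,c)}$ over $\Q$ acts as the full symmetric group $S_d$ on the $d$ children of each node in $\varphi^{-(n-1)}(0)$. This is equivalent to showing that $[K_n:K_{n-1}] = (d!)^{d^{n-1}}$, where $K_n$ is the splitting field of $\phi_{(d,c)}^n$ over $\Q$, and reduces by an induction to finding, at each level $n$, a prime $p_n$ of $\Q$ that ramifies in $K_n/K_{n-1}$ but not before. The main input is a careful analysis of the critical orbit data: since $\phi'_{(d,c)}(x) = dx^{d-2}(x - c(d-1))$, the critical points are $0$ (multiplicity $d-2$) and $a := c(d-1)$ (multiplicity $1$), and $\phi_{(d,c)}(0) = c(d-1) = a$, so $0$ is strictly pre-periodic and feeds immediately into the orbit of $a$. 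All discriminant/ramification information at level $n$ is therefore controlled by the sequence $\{\phi^n_{(d,c)}(a)\}_{n\ge 0}$.

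The next step is to use the classical formula \[\mathrm{disc}(\phi^n_{(d,c)}) \;\doteq\; \prod_{\gamma\,\text{critical}} \mathrm{Res}(\phi^n_{(d,c)}, \phi - \gamma)^{e_\gamma}\] (up to easily computed factors) to identify, for each $n$, the set of primes that can ramify newly in $K_n/K_{n-1}$ with primes dividing $\phi^n_{(d,c)}(a)$ but no earlier term of the orbit. Here is where Vojta's (or Hall--Lang's) conjecture enters: applied to the affine integer points on curves of the form $y^m = \phi^n_{(d,c)}(x) - a$ and variants thereof, it forces $\phi^n_{(d,c)}(a)$ to have a primitive prime divisor (a prime not dividing any earlier $\phi^j_{(d,c)}(a)$) for all sufficiently large $n$, and moreover forces the $\gcd(c,d-1)=1$ orbit to be infinite. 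This produces the sought-after ramified prime $p_n$ at each level $n\gg 0$, giving finite index; a more quantitative version (a uniform Vojta bound on the height of exceptional $c$) then shows that the residual obstruction comes from a finite set of small primes, so that for all but finitely many $c\in\mathcal{B}_d$ the index equals $1$.

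The reason the hypotheses on $d$ are split into cases (1) and (2) is that producing a ramified prime is not by itself enough: one must promote the inertia contribution into an element of $S_d$ that, together with the transitive action guaranteed by irreducibility, generates the whole symmetric group on each fiber. Inertia at $p_n$ acts on a fiber as a cycle whose length divides the ramification index, and this is controlled by the multiplicity structure of $\phi_{(d,c)}$ at its critical points. Because $\phi_{(d,c)}$ has a critical point of multiplicity $d-2$ at $0$ and a simple critical point at $a$, inertia near $a$ naturally produces a transposition, and inertia near $0$ produces an element whose cycle-type involves a $(d-1)$-cycle. When $d$ is prime (case (1)), transitivity plus a transposition immediately gives $S_d$. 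When $d$ is composite satisfying case (2), the transposition together with the $q$-power cyclic structure coming from $v_q(d-1)$, and the coprimality condition $(d-1,v_q(d-1))=1$, are exactly what is needed to apply a Jordan-type criterion to conclude that the group generated contains $S_d$; this is the essentially group-theoretic core of the argument and the case-distinction is there to make it work.

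The step I expect to be the main obstacle is the quantitative use of Vojta's conjecture to upgrade ``finite index'' to ``surjective for all but finitely many $c$''. Proving finite index requires only that for each fixed $c\in\mathcal{B}_d$ the orbit of $a$ eventually produces primitive prime divisors, which is a fairly clean consequence of a Vojta/Hall--Lang height bound applied level-by-level. Proving the stronger statement requires uniformity in $c$: one must show that the finite set of ``bad'' levels, where a new ramified prime fails to appear, can itself be bounded by invariants depending on a finite list of small primes independent of $c$, so that only finitely many $c$ allow any obstruction at all. Controlling this uniformity, and in particular ruling out a systematic arithmetic degeneracy as $c$ varies, is the delicate part; the rest of the argument is a relatively mechanical combination of the discriminant formula, Odoni's criterion, and the group-theoretic case analysis dictated by (1) and (2).
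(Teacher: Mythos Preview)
Your overall architecture matches the paper's: reduce to maximality of each $K_n/K_{n-1}$, control ramification by the single critical orbit $\{\phi^n_{(d,c)}(a)\}$ (since $\phi(0)=a$), use Proposition~\ref{prop:LNV}/\ref{prop:max} to get a transposition from an odd-multiplicity primitive prime divisor, and handle case~(2) via double transitivity coming from the $(d-1)$-ramification at a prime over $q$. So the skeleton is correct.

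However, your description of the Vojta step is not the argument that actually works, and the gap is substantive. You propose applying Vojta to curves ``$y^m=\phi^n_{(d,c)}(x)-a$''. The paper instead argues: if $\phi^n_c(a)$ has no odd-multiplicity primitive prime divisor outside $d$, then $\phi_c(\phi^{n-1}_c(a))=d_n y^2$, giving an integral point on $y^2=d_n\,\phi_c(x)$ (a \emph{fixed} degree-$d$ polynomial, not an iterate). Vojta then yields $h(\phi^{n-1}_c(a))\le C_d\bigl(h(d_n)+h(\phi_c)\bigr)$. The crucial point you are missing is the control of $h(d_n)$: by Lemma~\ref{lem:PPD}, a non-primitive prime of $\phi^n_c(a)$ already divides some $\phi^m_c(a)$ or $\phi^m_c(0)$ with $m\le\lfloor n/2\rfloor$, so $\mathrm{Supp}(d_n)\subset\mathrm{Supp}\bigl(d\cdot\phi_c(a)\cdots\phi_c^{\lfloor n/2\rfloor}(a)\bigr)$ and hence $h(d_n)\lesssim d^{\lfloor n/2\rfloor+1}h(a)$. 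Comparing with the unconditional lower bound $h(\phi^{n-1}_c(a))\ge (d-1)^{n-4}h(a)$ from Lemma~\ref{lem:orb} gives a bound $N$ on $n$ that is \emph{uniform in $c$}. Without the $\lfloor n/2\rfloor$ halving, $h(d_n)$ could be of order $d^{n}h(a)$ and the inequality never closes; this is the idea your sketch does not supply.

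For the remaining range $2\le n\le N$, the paper does \emph{not} use Vojta at all: it uses that the orbit of $0$ is a rigid divisibility sequence, so the primitive part of $\phi^{n+1}_c(0)$ equals $\Phi_{(c,n+1)}(0)$ (Lemma~\ref{lem:primpart}), and then that $\Phi_{(C,n+1)}(0)\in\Z[C]$ has at least three simple roots (Lemma~\ref{lem:dynatomic}), so Siegel's theorem rules out $\Phi_{(c,n+1)}(0)=d_iy^2$ for all but finitely many $c$. This is the mechanism that upgrades ``finite index for each $c$'' to ``surjective for all but finitely many $c$''; your proposal gestures at a ``uniform Vojta bound'' here, but the actual tool is Siegel applied level-by-level to a finite list of auxiliary curves in the parameter $c$. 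Finally, the infinitude of the orbit (your ``forces the orbit to be infinite'') is proved unconditionally by the elementary height estimate in Lemma~\ref{lem:orb}, not via Vojta.
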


Although the irreducibility of the iterates $\phi_{(d,c)}^n$ in Theorem \ref{thm:abcOdoni} may appear to be a strong assumption, it holds, for instance, when $c$ is a prime not dividing $d-1$, as the $\phi_{(d,c)}^n(x)$ are then all Eisenstein at $c$. Odoni's conjecture for degrees $d$ satisfying the hypothesis of Theorem \ref{thm:abcOdoni} follows by observing that all sufficiently large prime values of $c\in\Z^+$ lie in $  \mathcal{B}_d$.

Theorem \ref{thm:primeOdoni} yields a corollary about the density of prime divisors appearing in sequences given by the action of $\phi_{(p,k)}$. 

\begin{cor}{\label{cor:density}} Let $a_0\in \Q$ have infinite forward orbit under $\phi_{(p,k)}$, and for a prime $p\in\Z$, let $v_p$ denote the $p$-adic valuation. Let $\phi_{(p,k)}$ satisfy the hypotheses of Theorem \ref{thm:primeOdoni}, so that $G_{\Q}(\phi_{(p,k)})=\tu{Aut}(T_{\infty})$, and let \[P(a_0)=\{\tu{primes } p\in\Z^+: v_p(\phi_{(p,k)}^i(a_0))>0 \tu{ for at least one } i\ge 0\}. \] Then $P(a_0)$ has natural density zero in the set of all positive primes in $\Z$.\end{cor}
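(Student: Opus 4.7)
The plan is a standard Chebotarev density argument that leverages the surjectivity proved in Theorem \ref{thm:primeOdoni}. Write $\phi = \phi_{(p,k)}$, and for each $n \ge 1$ let $L_n$ denote the splitting field of $\phi^n$ over $\Q$, with $G_n = \Gal(L_n/\Q)$. By Theorem \ref{thm:primeOdoni}, $G_n$ is the full $n$-fold iterated wreath product $[S_p]^n$, which supplies the group-theoretic input directly.

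The first step is to reduce the corollary to controlling a fixed-point proportion in $G_n$. Let $q$ be a positive prime of good reduction (excluding a fixed finite set of bad primes associated to $\phi$ and the denominator of $a_0$). Then $q \in P(a_0)$ iff $\phi^i(a_0) \equiv 0 \pmod{q}$ for some $i \ge 0$. Fix $n \ge 1$ and split on the smallest such $i$: if $i<n$, then $q$ lies in the finite set $S_n := \{q : q \mid \phi^j(a_0) \text{ for some } 0 \le j < n\}$; if $i \ge n$, then $b := \phi^{i-n}(a_0) \bmod q$ satisfies $\phi^n(b)\equiv 0 \pmod{q}$, so $\phi^n$ has a root in $\mathbb{F}_q$. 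Writing $C_n := \{q : \phi^n \text{ has a root in } \mathbb{F}_q\}$, this yields
\[ P(a_0) \;\subseteq\; C_n \cup S_n \cup \{\text{bad primes}\}. \]

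By the Chebotarev density theorem applied to $L_n/\Q$, the natural density of $C_n$ equals $|F_n|/|G_n|$, where $F_n \subseteq G_n$ is the subset of elements having at least one fixed leaf on the depth-$n$ preimage tree. Since $S_n$ and the bad primes are finite, we get $d(P(a_0)) \le |F_n|/|G_n|$ for every $n$. The final step is to invoke the standard asymptotic that the proportion of elements of $[S_d]^n$ fixing some leaf tends to $0$ as $n \to \infty$ for any $d \ge 2$, a fact going back to Odoni \cite{Odoni} and used in the dynamical setting by Jones \cite{Jones}. Applied with $d=p$, this forces $d(P(a_0)) = 0$. The only real subtlety is the reduction in the first step (translating orbit divisibility into the existence of $\mathbb{F}_q$-roots of $\phi^n$); the wreath-product fixed-point estimate is purely combinatorial, and the Chebotarev step is routine once one has the surjectivity supplied by Theorem \ref{thm:primeOdoni}.
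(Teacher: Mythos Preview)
Your argument is correct and is exactly the standard Chebotarev/fixed-point-proportion proof underlying the result the paper invokes. The paper's own proof is essentially a one-line citation: it notes that all iterates $\phi_{(p,k)}^n$ are irreducible (Eisenstein at $p$) and that $\phi_{(p,k)}^n(a_0)\ne 0$ for all $n$ (since $a_0$ has infinite orbit), and then appeals directly to Theorem~4.1 of \cite{Jones}, which packages precisely the reduction $P(a_0)\subseteq C_n\cup(\text{finite set})$ together with the wreath-product fixed-point estimate you spell out. So the two approaches coincide in content; yours is self-contained, while the paper treats the density statement as a black box once surjectivity is in hand. One small technical remark: when you write $d(P(a_0))\le |F_n|/|G_n|$, you do not yet know the natural density exists; the inequality should be stated for the upper density, and then upper density zero forces natural density zero.
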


Finally, using techniques from the proof of Theorem \ref{thm:primeOdoni}, we exhibit a polynomial $\varphi(x) \in\Z[x]$ such that $G_{\Q}(\varphi)\le \tu{Aut}(T_{\infty})$ has index 2.  This is the first known example of a polynomial whose arboreal representation over $\Q$ is of finite index, yet is not surjective.

 \begin{thm}{\label{thm:index2}} Let $\phi(x)=x^3+7x^2-7$. Then $G_{\Q}(\phi)$ is an index 2 subgroup of $\tu{Aut}(T_{\infty})$.\end{thm}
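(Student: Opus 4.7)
My plan is to show that $G_{\Q}(\phi)$ equals the index-$2$ subgroup $\mathcal{H}\le\tu{Aut}(T_{\infty})$ consisting of those tree automorphisms whose action at the root (the level-$1$ action on the three roots of $\phi$) lies in $A_{3}\subset S_{3}$. The argument parallels the proof of Theorem \ref{thm:primeOdoni}, with the sole obstruction to full surjectivity arising from a discriminant coincidence at level one.

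\textbf{Irreducibility and the level-$1$ obstruction.} One checks $\phi'(x)=x(3x+14)$, with critical points $0$ and $-14/3$. Since $\phi(0)=-7$ and $\phi(-7)=-7$, the point $0$ is a strictly preperiodic critical point landing on the fixed point $-7$, matching the orbit structure used in Theorem \ref{thm:primeOdoni}; the other critical point $-14/3$ has infinite forward orbit. Because $\phi(x)\equiv x^{3}\pmod 7$ and $\phi^{n}(0)=\phi^{n-1}(-7)=-7$ for every $n\ge 1$, an easy induction shows $\phi^{n}(x)\equiv x^{3^{n}}\pmod 7$ with $v_{7}(\phi^{n}(0))=1$; hence each $\phi^{n}$ is Eisenstein at $7$ and therefore irreducible over $\Q$. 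A direct calculation gives $\tu{disc}(\phi)=49\cdot 169=91^{2}$, a rational square, so irreducibility forces $\Gal(\phi/\Q)=A_{3}$ and $G_{\Q}(\phi)\le\mathcal{H}$.

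\textbf{Maximal growth above level one.} For the reverse containment I adapt the machinery of Theorem \ref{thm:primeOdoni}. Let $F_{n}$ be the splitting field of $\phi^{n}$. A short resultant calculation gives $\tu{disc}(\phi(x)-\alpha)=(7+\alpha)(1183-27\alpha)$ for any $\alpha\in\phi^{-n}(0)$; the two factors arise from the linear factors $x$ and $3x+14$ of $\phi'$ together with the critical values $\phi(0)=-7$ and $\phi(-14/3)=1183/27$. To conclude $G_{\Q}(\phi)=\mathcal{H}$ it suffices to verify, for every $n\ge 1$: (i) each $\phi(x)-\alpha$ is irreducible over $F_{n}$; (ii) $(7+\alpha)(1183-27\alpha)$ is a non-square in $F_{n}$, forcing the Galois group of $\phi(x)-\alpha$ over $F_{n}$ to be $S_{3}$; and (iii) the $3^{n}$ classes $[(7+\alpha)(1183-27\alpha)]\in F_{n}^{\times}/(F_{n}^{\times})^{2}$ are $\mathbb{F}_{2}$-linearly independent, so that the resulting cubic splitting fields are mutually linearly disjoint over $F_{n}$. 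Condition (i) will follow from a Newton-polygon analysis at $7$: the polygon of $\phi(x)-\alpha$ relative to $v_{7}$ is a single segment of slope $-1/3^{n+1}$, forcing total ramification and hence irreducibility at the local level.

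\textbf{Main obstacle.} The delicate step is (iii), and already (ii) needs care. The norm of $(7+\alpha)(1183-27\alpha)$ from $\Q(\alpha)$ to $\Q$ equals $7a_{n+1}$, where $a_{n+1}\in\Z$ is the numerator of $\phi^{n+1}(-14/3)$ after clearing its denominator $3^{3^{n+1}}$; for $n=1$ this yields (ii) directly, since $7a_{2}=49\cdot 274280161$ is a non-square in $\Z$ and $[F_{1}:\Q(\alpha)]=1$, but for $n\ge 2$ the norm computation alone is inconclusive and one must localize at primes dividing $a_{n+1}$. In Theorem \ref{thm:primeOdoni} the analogous linear-disjointness step is handled by a careful analysis of the prime factorization of $\phi_{(p,k)}^{n}(-k(p-1))$ as $n$ grows; the same approach applies here to the orbit $\{\phi^{n}(-14/3)\}_{n\ge 0}$, showing that the new square-class contribution at level $n+1$ is independent of those accumulated before. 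The level-one coincidence $\tu{disc}(\phi)\equiv 1$ in $\Q^{\times}/(\Q^{\times})^{2}$ remains the only $\mathbb{F}_{2}$-dependence among the discriminant classes; once (iii) is established for every $n\ge 1$, the containment $\mathcal{H}\le G_{\Q}(\phi)$ follows, and combined with the upper bound this gives $G_{\Q}(\phi)=\mathcal{H}$, so $G_{\Q}(\phi)$ has index $2$ in $\tu{Aut}(T_{\infty})$.
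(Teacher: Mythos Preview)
Your setup is correct: the level-one discriminant $\tu{disc}(\phi)=91^{2}$ forces $G_{1}\cong A_{3}$, giving the upper bound $G_{\Q}(\phi)\le\mathcal{H}$, and the remaining task is indeed to show $K_{n}/K_{n-1}$ is maximal for every $n\ge 2$. However, your proof of this last step is incomplete. You correctly isolate conditions (ii) and (iii) as the heart of the matter, but you do not prove them; the sentence ``the same approach applies here to the orbit $\{\phi^{n}(-14/3)\}_{n\ge 0}$, showing that the new square-class contribution at level $n+1$ is independent of those accumulated before'' is an assertion, not an argument. The ``approach'' of Theorem~\ref{thm:primeOdoni} is a congruence mod $3$ showing $\phi_{(p,k)}^{n}(a)\equiv -1\pmod 3$ for all $n$, which rules out $\phi_{(p,k)}^{n}(a)=ky^{2}$ uniformly. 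No such congruence is available here: the critical point $-14/3$ is not an integer, and $\phi$ is not a member of the $\phi_{(p,k)}$ family (it would require $3k=7$). So appealing to that argument does not close the gap.

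What the paper actually does for this step is quite different, and it is the essential idea you are missing. One checks $v_{7}(\phi^{n}(a))=1$ and $v_{3}(\phi^{n}(a))$ is odd for all $n$, and $\phi^{n}(a)>0$ for $n\ge 2$; then, exactly as in Proposition~\ref{prop:trimax}, maximality of $K_{n}/K_{n-1}$ reduces to the Diophantine condition $\phi^{n}(a)\ne 21y^{2}$ for $y\in\Z_{S}$ with $S=\{3,\infty\}$. This is equivalent to asking that $\phi^{n-1}(a)$ not be the $x$-coordinate of an $S$-integral point on the elliptic curve $\mathcal{C}_{3}:y^{2}=21(x^{3}+7x^{2}-7)$. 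A \texttt{Magma} computation of all $S$-integral points on (a Weierstrass model of) $\mathcal{C}_{3}$ yields exactly three pairs, and one verifies that only $x=-14/3=a$ itself lies in the forward orbit of $a$; this is precisely the $n=1$ obstruction you already found. Hence $K_{n}/K_{n-1}$ is maximal for all $n\ge 2$. Without this integral-point computation (or a substitute), your conditions (ii) and (iii) remain unverified, and the proof does not go through.
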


It is also worth remarking that the topic of Galois groups of trinomials over $\Q$ has been extensively studied (see for example \cite{Salinier1}, \cite{Salinier2},\cite{Salinier3},and \cite{Osada}). Hilbert used trinomials to prove the existence, for each degree $d$, of a polynomial over $\Q$ whose Galois group is $S_d$ \cite{Hilbert}. Since then, some constructive results have been shown, concerning which trinomials over $\Q$ have this property. Moreover, the generic trinomial $x^d+tx^s+w\in \Q(t,w)[x]$, where $d>s\ge 1$, is known to have Galois group $S_d$ over $\Q(t,w)$ as long as $(d,s)=1$ \cite{Smith}. Here we begin to extend this question to iterated Galois groups, and to the infinite Galois groups $G_{\Q}(\varphi)$.
\newline

\indent \textbf{Acknowledgements:} The author would like to thank Nigel Boston, Laura DeMarco, Wade Hindes, Rafe Jones, Jamie Juul, and Tom Tucker for helpful conversations related to this work.  Several of these took place at the May 2016 AIM workshop titled ``The Galois theory of orbits in arithmetic dynamics,'' and the author also thanks AIM and the organizers of this workshop.

\section{Ramification Behavior in Extensions Defined by Trinomials}\label{section:ram}

In this section, we prove that certain irreducible monic trinomials $f(x)$ over a number field $F$ have the property that primes ramifying in $\Gal(f(x))/F$ often have inertia groups of order 2. This result is an adaptation of Theorem 2 of \cite{LNV} to relative extensions of number fields (compare p.151 of \cite{FrohlichTaylor}).

\begin{thm}{\label{thm:LNV}}

Let $F$ be a number field, with ring of integers $\mathcal{O}_F$. Let $f(x)=x^d+Ax^s+B\in \mathcal{O}_F[x]$ be irreducible over $F$, with $d>s\ge 1$, $(d,s)=1$, let $\Delta=\tu{Disc}(\mathcal{O}_{F(\beta)})$ for $\beta$ a root of $f$, let $\delta=\tu{Disc}(f)$, and let $L$ be the Galois closure of $F(\beta)/F$. Let $\frak{p}$ be a prime of $\mathcal{O}_F$ such that $\frak{p} \nmid AB\mathcal{O}_F$. If $\frak{p}$ ramifies in $\mathcal{O}_{F(\beta)}$, then $e(\frak{q}/\frak{p})=2$ for any prime $\frak{q}$ in $\mathcal{O}_L$ lying above $\frak{p}$. In particular, $\Gal(L/F)$ contains a transposition.\end{thm}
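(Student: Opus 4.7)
The strategy is to reduce to a local calculation at $\mathfrak{p}$. I will show that the reduction $\bar f := f \bmod \mathfrak{p}$ in $(\mathcal{O}_F/\mathfrak{p})[x]$ has a unique multiple root $\bar\alpha$, of multiplicity exactly $2$. Hensel's lemma applied to the coprime factorization $\bar f = (x - \bar\alpha)^2 \bar g$ then produces a factorization $f = f_1 f_2$ in $\mathcal{O}_{F_\mathfrak{p}}[x]$, with $f_1$ monic of degree $2$ reducing to $(x - \bar\alpha)^2$ and $f_2$ reducing to $\bar g$. Since $\bar g$ is separable, $f_2$ contributes only unramified local extensions, so the assumed ramification of $\mathfrak{p}$ in $F(\beta)$ must come from $f_1$; this forces $f_1$ to be irreducible over $F_\mathfrak{p}$ and to define a totally ramified quadratic extension. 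The splitting field of $f$ over $F_\mathfrak{p}$ is then the compositum of this ramified quadratic with an unramified extension, so it has ramification index exactly $2$, giving $e(\mathfrak{q}/\mathfrak{p}) = 2$. The generator of the inertia group $I(\mathfrak{q}/\mathfrak{p})$ will swap the two roots of $f_1$ and fix those of $f_2$, hence act as a transposition in $S_d$.

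The heart of the argument is the claim about $\bar f$. I will start from the classical trinomial discriminant formula
\[
\delta = (-1)^{d(d-1)/2}\, B^{s-1}\bigl[d^d B^{d-s} - (-1)^d (d-s)^{d-s} s^s A^d\bigr].
\]
Ramification forces $\mathfrak{p}\mid\Delta$, hence $\mathfrak{p}\mid\delta$, and since $\mathfrak{p}\nmid B$ we obtain the congruence $d^d B^{d-s} \equiv (-1)^d (d-s)^{d-s} s^s A^d \pmod{\mathfrak{p}}$. A short case check using $(d,s)=1$ and $\mathfrak{p}\nmid AB$ rules out each of $\mathfrak{p}\mid d$, $\mathfrak{p}\mid s$, and $\mathfrak{p}\mid(d-s)$, since any of these would force $\mathfrak{p}\mid B$. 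Now $\bar f'(x) = x^{s-1}(\bar d x^{d-s} + \bar s\bar A)$, and $\bar f(0) = \bar B \ne 0$, so any multiple root $\bar\alpha$ of $\bar f$ is nonzero and satisfies $\bar\alpha^{d-s} = -\bar s\bar A/\bar d$; substituting into $\bar f(\bar\alpha) = 0$ yields a second relation of the form $\bar\alpha^s = \bar B\bar d/\bar A(\bar s - \bar d)$. Bézout applied to $(d-s,s)=1$ then expresses $\bar\alpha$ itself as a fixed monomial in these two quantities, giving uniqueness. A direct computation using the first relation gives
\[
\bar f''(\bar\alpha) = \bar\alpha^{s-2}\,\bar s\,\bar A\,(\bar s - \bar d),
\]
which is nonzero by the case check, so the multiplicity of $\bar\alpha$ is exactly $2$.

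The remaining step, translating the local factorization into inertia data for $\Gal(L/F)$, is routine: identifying $L \otimes_F F_\mathfrak{p}$ with the product of completions $\prod_{\mathfrak{q}\mid\mathfrak{p}} L_\mathfrak{q}$, each of which is a splitting field of $f$ over $F_\mathfrak{p}$, and using $|I(\mathfrak{q}/\mathfrak{p})| = e(L_\mathfrak{q}/F_\mathfrak{p}) = 2$, suffices. The principal technical obstacle is the coupled case analysis in the preceding paragraph: verifying that the discriminant divisibility $\mathfrak{p}\mid\delta$ together with $\mathfrak{p}\nmid AB$ automatically supplies the three nonvanishings $\mathfrak{p}\nmid d$, $\mathfrak{p}\nmid s$, $\mathfrak{p}\nmid(d-s)$ that are needed both for the uniqueness of $\bar\alpha$ and for the nonvanishing of $\bar f''(\bar\alpha)$. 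Once these are in hand, Hensel's lemma and standard local-global ramification theory close the argument.
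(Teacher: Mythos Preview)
Your proof is correct and follows essentially the same route as the paper: both arguments hinge on showing that $\bar f$ has a unique multiple root, of multiplicity exactly two, by computing $\bar\alpha^{d-s}$ and $\bar\alpha^s$ from $f'(\bar\alpha)=f(\bar\alpha)=0$ and invoking $(d-s,s)=1$, after first ruling out $\frak p\mid ds(d-s)$ via the trinomial discriminant formula. The only difference is in the finish: the paper argues directly that a nontrivial $\sigma\in I(\frak q/\frak p)$ must fix each $\beta_i$ whose reduction mod $\frak q$ is a simple root of $\bar f$ and hence swap the remaining two, whereas you lift the factorization via Hensel and read off $e=2$ from the compositum of a ramified quadratic with an unramified extension; these are equivalent standard packagings of the same local picture.
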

 \begin{proof} Let $\frak{p}$ be a prime of $\mathcal{O}_F$ satisfying the above conditions, and suppose $\frak{p}$ ramifies in $\mathcal{O}_{F(\beta)}$. The discriminant of $f(x)$ is given by \begin{equation}{\label{disctrinomial}}\delta=(-1)^{d(d-1)/2}B^{s-1}(d^dB^{d-s}+(-1)^{d-1}(d-s)^{d-s}s^sA^d).\end{equation} See Lemma 4 of \cite{Smith} for details. Hence if $\frak{p}\mid ds(d-s)\mathcal{O}_F$, then $\frak{p}\nmid (\delta)$ (as $(d,s)=1$ implies $\frak{p}$ cannot divide both $(d)$ and $(s)$ in $\mathcal{O}_F$).  Therefore assume $\frak{p}\nmid ds(d-s)\mathcal{O}_F$. Since $\frak{p}\nmid ds(d-s)A\mathcal{O}_F$, $f'(x)=x^{s-1}(dx^{d-s}+sA)$ has no multiple factors mod $\frak{p}$ other than $x$. It follows that every irreducible factor of $f(x)\Mod{\frak{p}}$ has multiplicity strictly less than three.  Let $\eta$ be a multiple root of $f(x)\Mod{\frak{p}}$ in an algebraic closure of $\mathcal{O}/\frak{p}$. We have \[\eta^{d-s}=-\fr{sA}{d}\in \mathcal{O}/\frak{p}\] and \[\eta^s=-\fr{B}{\eta^{d-s}+A}=-\fr{dB}{(d-s)A}\in \mathcal{O}/\frak{p}.\]  As $(d,s)=1$, we deduce $\eta\in \mathcal{O}/\frak{p}$. If $\xi\in \mathcal{O}/\frak{p}$ is another multiple root of $f\Mod{\frak{p}}$, then from $(\xi/\eta)^s=(\xi/\eta)^{d-s}=1$ and $(s,d-s)=1$, we obtain $\xi=\eta$. Therefore if $\frak{p}\mid (\delta)$, then the factorization of $f(x)$ into irreducible factors mod $\frak{p}$ is: 
 
 \[f(x)=(x-\eta)^2\varphi_1(x)\cdots \varphi_r(x)\Mod{\frak{p}}\] where the $\varphi_i(x)$ are all distinct and separable. Let $\frak{q}$ be any prime above $\frak{p}$ in $\mathcal{O}_L$. Write \[\prod_{i=1}^{r'} (x-\beta_i)\equiv(x-\eta)^2\varphi_1(x)\cdots \varphi_r(x)\Mod{\frak{q}}\] where $\beta_1,\dots,\beta_{r'}$ are the distinct roots of $f$ in $L$, with
 \[(x-\beta_1)(x-\beta_2)\equiv (x-\eta)^2\Mod{\frak{q}}.\] Now choose any $\sigma\in I(\frak{q}/\frak{p})$, $\sigma\ne 1$. Since $(x-\eta)\varphi_1(x)\cdots \varphi_r(x)$ is separable mod $\frak{p}$, we know $\beta_2,\beta_3,\dots,\beta_{r'}$ are mutually non-congruent mod $\frak{q}$. Hence $\sigma(\beta_i)=\beta_i$ for all $i\ge 3$. Since $\sigma\ne 1$, this forces $\sigma(\beta_1)=\beta_2,\sigma(\beta_2)=\beta_1$. Therefore $I(\frak{q}/\frak{p})\cong \Z/2\Z$, and $\Gal(L/F)$ contains a transposition. \end{proof}
  
  We next state several definitions and formulas that will be used regularly in subsequent proofs. For all $n\ge 1$, let $K_n$ denote the splitting field over $K$ of $\varphi^n(x)$, and let $K_0=K$. Let $\mathcal{O}_K$ denote the ring of integers of the number field $K$.
  
\begin{definition*} For $\alpha\in\mathcal{O}_K$ and $\varphi\in\mathcal{O}_K[x]$, we say a prime divisor $\frak{p}$ of $\varphi^n(\alpha)$ is a \textup{primitive prime divisor} (of $\varphi^n(\alpha)$) if, for all $m<n$, $\frak{p}\nmid \varphi^m(\alpha)$. \end{definition*}

\begin{definition*} For $y\in\Z$, the \tu{support of $y$}, denoted $\tu{Supp}(y)$, is the set of positive primes in $\Z$ dividing $y$.\end{definition*}

\begin{definition*} Let $\varphi(x)\in \mathcal{O}_K[x]$ have degree $d$. For $n\ge 2$, we say $H_n=\Gal(K_n/K_{n-1})$ is \textup{maximal} if $H_n=S_d^m$, where $m=deg(\varphi^{n-1}(x))$, and accordingly refer to the extension $K_n/K_{n-1}$ as \tu{maximal}. When $n=1$, $G_1=\Gal(K_1/K)$ is said to be \tu{maximal} if $G_1(\varphi)\cong S_d$.

\end{definition*} It is useful to note the following discriminant formula, whose proof can be found in \cite{Farshid}. Let $\psi\in K[x]$ be of degree $d$ with leading coefficient $\alpha$.

\begin{equation}{\label{discformula}}\tu{Disc}_x(\psi(x)-t)=(-1)^{(d-1)(d-2)/2}d^d\alpha^{d-1}\prod_{b\in R_{\psi}} (t-\psi(b))^{e(b,\psi)} \end{equation} where $R_{\psi}$ denotes the ramification locus of $\psi$, $e(b,\psi)$ denotes the multiplicity of the critical point $b$, and $t$ plays the role of any constant in $K$. From this we obtain \begin{equation}\tu{Disc}_x(\psi^n(x)-t)=(-1)^{(d^n-1)(d^n-2)/2}d^{nd^n}\alpha^{(d^n-1)/(d-1)}\prod_{c\in R_{{\psi}^n}}(t-\psi^n(c))^{e(c,\psi^n)}\end{equation} which is equal to \begin{equation}{\label{disciteratesformula}}(-1)^{(d^n-1)(d^n-2)/2}d^{nd^n}\alpha^{(d^n-1)/(d-1)}\prod_{b\in R_{\psi}}((t-\psi(b))^{d^{n-1}}(t-\psi^2(b))^{d^{n-2}}\cdots(t-\psi^n(b)))^{e(b,\psi)}.\end{equation} Substituting an appropriate value of $t$, we will be using (\ref{discformula}) as an expression for the discriminant of certain trinomials over number fields. Taking $t=0$, the expression in (\ref{disciteratesformula}) shows that for a monic, irreducible $\varphi\in\mathcal{O}_K[x]$, any prime outside $d$ ramifying in $K_n(\varphi)$ must divide $\varphi^m(a)$ for some $m\le n$, where $a$ is some critical point of $\varphi$.

\begin{prop}{\label{prop:LNV}} Let $K$ be a number field, with $\mathcal{O}=\mathcal{O}_K$ its ring of integers. Suppose $\varphi(x)=x^d+Ax^s+B\in \mathcal{O}_K[x]$ has all of its iterates irreducible over $K$, with $(d,s)=1$, $d>s\ge 1$, and all critical points lying in $K$. For a given $n\ge 2$, let $\alpha_1,\dots,\alpha_m$ be the roots of $\varphi^{n-1}(x)$, and let $M_i$ be the splitting field of $\varphi(x)-\alpha_i$ over $K(\alpha_i)$. Suppose that for some critical point $a$ of $\varphi$ with $e(a,\varphi)=1$, there exists a prime divisor $p_n$ of $\varphi^n(a)$ such that $v_{p_n}(\varphi^n(a))$ is odd, and $\tu{gcd}(p_n,dA\varphi^n(b)\varphi^{n-1}(B))=1$ for all critical points $b\ne a$ of $\varphi(x)$. Then for all $i$, $\Gal(M_i/K(\alpha_i))$ contains a transposition.

Furthermore, if $n=1$, the same hypotheses imply that $\Gal(K_1/K)$ contains a transposition.
\end{prop}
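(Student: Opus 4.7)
The strategy is to apply Theorem \ref{thm:LNV} to the trinomial $f_i(x) = \varphi(x) - \alpha_i \in \mathcal{O}_{K(\alpha_i)}[x]$ for each root $\alpha_i$ of $\varphi^{n-1}$. Irreducibility of $f_i$ over $K(\alpha_i)$ is automatic: for any root $\beta$ of $f_i$ one has $\varphi^n(\beta) = 0$, and combining irreducibility of $\varphi^n$ over $K$ with $[K(\alpha_i):K] = d^{n-1}$ forces $[K(\alpha_i,\beta):K(\alpha_i)] = d$, so $f_i$ is the minimal polynomial of $\beta$. Theorem \ref{thm:LNV} will then deliver the transposition once I produce a prime $\mathfrak{p}_i$ of $\mathcal{O}_{K(\alpha_i)}$ above $p_n$ that is coprime to $A(B-\alpha_i)$ and ramifies in $\mathcal{O}_{K(\alpha_i,\beta)}$.

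To locate $\mathfrak{p}_i$, apply formula (\ref{discformula}) with $\psi = \varphi$ and $t = \alpha_i$:
$$\tu{Disc}_x(f_i) = (-1)^{(d-1)(d-2)/2} d^d \prod_{b \in R_\varphi} (\alpha_i - \varphi(b))^{e(b,\varphi)}.$$
Because the Galois conjugates of $\alpha_i$ over $K$ are exactly $\alpha_1, \dots, \alpha_m$, we obtain $\varphi^n(a) = \pm\prod_j (\varphi(a) - \alpha_j) = \pm \N_{K(\alpha_i)/K}(\varphi(a) - \alpha_i)$. Expanding the $p_n$-adic valuation of this norm through the usual residue-degree identity,
$$v_{p_n}(\varphi^n(a)) = \sum_{\mathfrak{p}\mid p_n \text{ in } K(\alpha_i)} f(\mathfrak{p}/p_n)\, v_{\mathfrak{p}}(\alpha_i - \varphi(a)),$$
the oddness hypothesis forces at least one summand, and hence some $v_{\mathfrak{p}_i}(\alpha_i - \varphi(a))$, to be odd. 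The coprimality hypotheses then rule out every other contribution to the discriminant: if $\mathfrak{p}_i$ divided $\alpha_i - \varphi(b)$ for some critical $b \neq a$ (resp.\ if $\mathfrak{p}_i$ divided $B-\alpha_i$), then $p_n$ would divide $\N_{K(\alpha_i)/K}(\alpha_i - \varphi(b)) = \pm\varphi^n(b)$ (resp.\ $\pm\varphi^{n-1}(B)$), contradicting the assumption. Since additionally $p_n \nmid dA$, only the factor $(\alpha_i - \varphi(a))^{e(a,\varphi)} = \alpha_i - \varphi(a)$ contributes at $\mathfrak{p}_i$, giving $v_{\mathfrak{p}_i}(\tu{Disc}_x(f_i)) = v_{\mathfrak{p}_i}(\alpha_i - \varphi(a))$, odd, and simultaneously $\mathfrak{p}_i \nmid A(B-\alpha_i)$.

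Because $\tu{Disc}_x(f_i)$ differs from the true relative discriminant $\tu{Disc}(\mathcal{O}_{K(\alpha_i,\beta)}/\mathcal{O}_{K(\alpha_i)})$ by the square of the $\mathcal{O}_{K(\alpha_i)}$-module index, the odd $\mathfrak{p}_i$-valuation persists, forcing $\mathfrak{p}_i$ to ramify. Theorem \ref{thm:LNV} then yields the desired transposition in $\Gal(M_i/K(\alpha_i))$, and the argument runs uniformly in $i$. The $n=1$ case is a direct specialization: apply Theorem \ref{thm:LNV} to $\varphi$ over $\mathcal{O}_K$ with $\mathfrak{p}=p_1$, observing via (\ref{discformula}) at $t=0$ and the same norm-based argument that $v_{p_1}(\tu{Disc}(\varphi)) = v_{p_1}(\varphi(a))$ is odd and $p_1 \nmid AB$ (using $\varphi^{0}(B)=B$). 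The step requiring the most care is the global-to-local descent in the second paragraph, converting the parity of the integer $v_{p_n}(\varphi^n(a))$ into the existence of a prime of the (possibly non-Galois) extension $K(\alpha_i)/K$ at which $\alpha_i - \varphi(a)$ has odd valuation; the norm-valuation identity resolves it cleanly, and no further Galois bookkeeping is needed to propagate the conclusion across the $\alpha_i$.
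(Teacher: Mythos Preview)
Your proof is correct and follows essentially the same route as the paper's: both find a prime $\mathfrak{p}$ of $K(\alpha_i)$ above $p_n$ at which $\varphi(a)-\alpha_i$ has odd valuation via the norm relation $N_{K(\alpha_i)/K}(\varphi(a)-\alpha_i)=\varphi^n(a)$, use the same norm identities for $\varphi^n(b)$ and $\varphi^{n-1}(B)$ to show $\mathfrak{p}\nmid A(B-\alpha_i)$ and that no other critical factor in~(\ref{discformula}) contributes, and then invoke Theorem~\ref{thm:LNV}. The only cosmetic differences are that the paper cites Capelli's lemma for irreducibility where you use a direct degree count, and that you make the norm--valuation identity and the square-of-index relation between $\tu{Disc}_x(f_i)$ and the relative discriminant explicit, while the paper leaves these implicit.
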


\begin{proof} When $n=1$, the statement follows immediately from Theorem \ref{thm:LNV}.  Therefore assume $n\ge 2$. Noting that $N_{K(\alpha_i)/K}(\varphi(a)-\alpha_i)=\varphi^n(a)$, the hypotheses imply that there is a prime $\frak{p}$ of $\mathcal{O}_{K(\alpha_i)}$ such that $v_{\frak{p}}(\varphi(a)-\alpha_i)$ is odd, and $\frak{p}$ lies above $p_n$. Moreover, since $N_{K(\alpha_i)/K}(\varphi(b)-\alpha_i)=\varphi^n(b)$ for all critical points $b$, and since the only primes possibly dividing the denominators of the $\varphi(b)-\alpha_i\in K(\alpha_i)$ are the prime divisors of $d$, it follows from $(\ref{discformula})$ that $v_{\frak{p}}(\varphi(a)-\alpha_i)=v_{\frak{p}}(\tu{Disc}(\varphi(x)-\alpha_i))$. Therefore $\frak{p}$ divides $\tu{Disc}(\varphi(x)-\alpha_i)$ to odd multiplicity.

By Capelli's lemma (see \cite{Capelli} for a statement and proof), the irreducibility of $\varphi^n(x)$ over $K$ implies that $\varphi(x)-\alpha_i$ is irreducible over $K(\alpha_i)$. Hence $\frak{p}$ must ramify in $M_i$. We also observe that $\frak{p}\nmid A(B-\alpha_i)$, as $N_{K(\alpha_i)/K}(B-\alpha_i)=\varphi^{n-1}(B)$ is coprime to $p_n$. Thus, by Theorem \ref{thm:LNV}, it follows that $\Gal(M_i/K(\alpha_i))$ contains a transposition corresponding to $I(\frak{q}/\frak{p})$ for any prime $\frak{q}$ in $\mathcal{O}_{M_i}$ lying above $\frak{p}$. \end{proof} 
 
 We now set forth a criterion ensuring maximality of the extension $K_n/K_{n-1}$. The ideas are inspired by the proof of Theorem 3.1 of \cite{Jamie}.
 
 \begin{prop}{\label{prop:max}} Assume the hypotheses and notation of Proposition \ref{prop:LNV}; suppose, moreover, that $p_n\nmid \varphi^m(a)$ for all $1\le m\le n-1$, and that for all critical points $b\ne a$ of $\varphi(x)$ and $m\le n$, we have $p_n\nmid \varphi^m(b)$. Then $\Gal(M_i/K(\alpha_i))\cong S_d$ implies $\Gal(K_n/K_{n-1})$ is maximal. \end{prop}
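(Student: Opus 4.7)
The plan is to identify $\Gal(K_n/K_{n-1})$ with the full product $\prod_{i=1}^m \Gal(M_i/K(\alpha_i)) \cong S_d^m$ via the natural restriction embedding $\sigma\mapsto(\sigma|_{M_i})_i$. Doing so requires both surjectivity onto each factor and independence among the factors, and both will be obtained from a local ramification analysis at $p_n$.

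First I would verify that $p_n$ is unramified in $K_{n-1}/K$. Setting $t=0$ in \eqref{disciteratesformula} expresses $\tu{Disc}(\varphi^{n-1}(x))$ as a product of $d^{(n-1)d^{n-1}}$ with terms involving $\varphi^j(b)$ for critical points $b\in R_\varphi$ and $1\le j\le n-1$, all of which are coprime to $p_n$ by the hypotheses. Fix a prime $\frak{P}$ of $K_n$ above $p_n$, let $\frak{q}=\frak{P}\cap K_{n-1}$ and $\frak{p}_i=\frak{P}\cap K(\alpha_i)$. Separability of $\varphi^{n-1}(x)$ mod $\frak{q}$ implies the residues $\bar\alpha_i$ are pairwise distinct; and since $p_n\mid\varphi^n(a)=\prod_i(\varphi(a)-\alpha_i)$, exactly one $\alpha_i$, say $\alpha_1$, satisfies $\bar\alpha_1=\overline{\varphi(a)}$. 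For any $i\ne 1$ and any critical point $b$, a coincidence $\bar\alpha_i=\overline{\varphi(b)}$ would force $p_n\mid N_{K(\alpha_i)/K}(\alpha_i-\varphi(b))=\pm\varphi^n(b)$, contradicting the hypotheses.

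Next I would prove $M_i\cap K_{n-1}=K(\alpha_i)$ for each $i$, which yields the desired surjectivity onto each factor. By Proposition \ref{prop:LNV} (and its proof via Theorem \ref{thm:LNV}), the inertia of $\frak{p}_1$ in $\Gal(M_1/K(\alpha_1))$ is cyclic of order two, generated by a transposition $\tau_1$. Since $K_{n-1}/K(\alpha_1)$ is unramified at $\frak{p}_1$, the intermediate field $M_1\cap K_{n-1}$ is also unramified over $K(\alpha_1)$ at $\frak{p}_1$, so under the Galois correspondence it corresponds to a normal subgroup of $\Gal(M_1/K(\alpha_1))\cong S_d$ containing $\tau_1$; the only such normal subgroup is $S_d$ itself, so $M_1\cap K_{n-1}=K(\alpha_1)$. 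The analogous statement for general $i$ follows from $\Gal(K_n/K)$-conjugation, using the normality of $\Gal(K_n/K_{n-1})$ in $\Gal(K_n/K)$ and transitivity on the $\alpha_i$'s.

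Finally I would analyze the inertia $I=I(\frak{P}/\frak{q})\subseteq\Gal(K_n/K_{n-1})$ to extract independence. For $i\ne 1$, formula \eqref{discformula} shows $\tu{Disc}(\varphi(x)-\alpha_i)$ is coprime to $\frak{q}$ (no $\bar\alpha_i$ is a critical value mod $\frak{q}$), so $\varphi(x)-\alpha_i$ is separable mod $\frak{q}$, $M_iK_{n-1}/K_{n-1}$ is unramified at $\frak{q}$, and $I$ projects trivially to that factor. For $i=1$, the unramification of $K_{n-1}/K(\alpha_1)$ at $\frak{p}_1$ identifies the projection of $I$ with $\langle\tau_1\rangle$, so $I$ is generated by an element $\sigma_1$ acting as $\tau_1$ on the first coordinate and trivially elsewhere. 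Conjugating $\sigma_1$ by $g\in\Gal(K_n/K)$ with $g(\alpha_1)=\alpha_j$ produces $\sigma_j\in\Gal(K_n/K_{n-1})$ that is a transposition at coordinate $j$ and trivial elsewhere. A standard group-theoretic observation then finishes the proof: a subgroup $H\subseteq S_d^m$ that surjects onto each factor and contains such a $\sigma_j$ for every $j$ must equal $S_d^m$, because the kernel $H_j$ of projection to coordinates $\ne j$ is normal in $H$, its image in $S_d$ is therefore closed under conjugation by $S_d$, and any normal subgroup of $S_d$ containing a transposition equals $S_d$. The main obstacle is the local inertia computation, i.e.\ verifying that $I$ truly has support only at the first coordinate; this is where the full strength of the separation hypotheses on the critical orbits is used.
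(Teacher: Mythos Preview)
Your proposal is correct and follows essentially the same strategy as the paper: both arguments hinge on showing that $p_n$ is unramified in $K_{n-1}/K$, that the inertia at a prime of $K_n$ over $p_n$ acts as a transposition in exactly one coordinate $i$ and trivially in the others (because $\frak{P}\nmid\tu{Disc}(\varphi(x)-\alpha_j)$ for $j\ne i$), and then invoke the fact that a normal subgroup of $S_d$ containing a transposition is all of $S_d$. The only cosmetic difference is that the paper organizes the argument around the auxiliary field $\widehat{M}_i=K_{n-1}\prod_{j\ne i}M_j$ and shows directly that $\Gal(K_n/\widehat{M}_i)\cong S_d$, whereas you phrase the same computation in terms of projections inside $\prod_i S_d$ and make the conjugation step (which the paper leaves implicit in its final sentence) explicit.
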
 
 \begin{proof} Let $\widehat{M}_i=K_{n-1}\prod_{j\ne i} M_j$. Throughout the proof, it will be convenient to refer to the following diagram of field extensions:
 
 \[\centerline{
\xymatrix{
    &   &  K_n\makebox[0pt][l]{${}$} \ar@{-}[dl] \ar@{-}[dr] &  \\
    & M_iK_{n-1}\ar@{-}[dl]\ar@{-}[rd] &  & \widehat{M}_i \makebox[15pt][l]{${}$} \ar@{-}[dl]\\
  M_i\ar@{-}[rd] &  & K_{n-1}\makebox[0pt][l]{${}$} \ar@{-}[dl] &  &\\
   & K(\alpha_i) &  & \makebox[0pt][l]{${}$}
}}\hspace{6em}
\]

\vspace{5mm} We first argue that $\Gal(M_iK_{n-1}/K_{n-1})\cong S_d$. Note that $K_{n-1}/K(\alpha_i)$ is a normal extension, so base changing by $K_{n-1}$, we obtain $\Gal(M_iK_{n-1}/K_{n-1})$ as a normal subgroup of $\Gal(M_i/K(\alpha_i))\cong S_d$. Let $\frak{p}$ and $\frak{q}$ be as in the proof of Proposition \ref{prop:LNV}, let $\frak{Q}$ be any prime of $M_iK_{n-1}$ lying above $\frak{q}$, and let $\frak{P}=\frak{Q}\cap K_{n-1}$.  Then $e(\frak{P}/\frak{p})=1$; this is because $p_n$ is a primitive prime divisor of $\varphi^n(a)$, so (\ref{disciteratesformula}) implies $p_n=\frak{p}\cap K$ doesn't ramify in $K_{n-1}$. Thus $e(\frak{Q}/\frak{q})=1$ as well. By the proof of Proposition \ref{prop:LNV}, $e(\frak{q}/\frak{p})=2$, so this forces $e(\frak{Q}/\frak{P})=2$. Hence $\Gal(M_iK_{n-1}/K_{n-1})$ is a normal subgroup of $S_d$ containing a transposition, meaning $\Gal(M_iK_{n-1}/K_{n-1})\cong S_d$.

Let $\frak{Q'}$ be a prime of $K_n$ lying above $\frak{Q}$, and let $\frak{P}'$ be the prime in $\widehat{M}_i$ lying below $\frak{Q}'$.

We know that $\frak{P}$ cannot divide $\tu{Disc}(\varphi(x)-\alpha_j)$ for $j\ne i$; indeed, otherwise, $\frak{P}$ divides either $\varphi(a)-\alpha_j$, or $\varphi(b)-\alpha_j$ for some critical point $b\ne a$. In the former case, $\frak{P}$ then divides $\alpha_i-\alpha_j\mid \tu{Disc}(\varphi^n)$, contradicting the fact that $p_n=\frak{P}\cap K$ does not ramify in $K_{n-1}$. In the latter case, $\frak{P}$ divides $N_{K(\alpha_i)/K}(\varphi(b)-\alpha_i)=\varphi^n(b)$, so $p_n\mid \varphi^n(b)$, contradicting our hypotheses.  Hence $e(\frak{P}'/\frak{P})=1$. 

This forces $e(\frak{Q}'/\frak{Q})=1$. We then see that $e(\frak{Q}'/\frak{P})=2$, and thus $e(\frak{Q}'/\frak{P}')=2$. In particular, $\Gal(K_n/\widehat{M}_i)$ contains a transposition. As it is a normal subgroup of $\Gal(M_iK_{n-1}/K_{n-1})\cong S_d$, this implies $\Gal(K_n/\widehat{M}_i)\cong S_d$, and so $\Gal(K_n/K_{n-1})\cong S_d^m$, $m:=deg((\varphi^{n-1})$. \end{proof}
 
\section{Odoni's Conjecture in Prime Degree}

Let $\phi_{(p,k)}(x)=x^p+kpx^{p-1}-kp$, with $p\ge 3$ prime, $k\in\Z^+$, and $p\nmid k$. The finite critical points of $\phi_{(p,k)}$ are 0 and $a_{(p,k)}=-(p-1)k$, with multiplicities $p-2$ and 1, respectively. Since there is no risk of confusion, we drop the dependence of the nonzero critical point on $(p,k)$ and denote it as $a$. The point 0 is strictly preperiodic under $\phi_{(p,k)}$, as $\phi_{(p,k)}(0)=-kp$, and $\phi_{(p,k)}(-kp)=-kp$. 

\begin{lem}{\label{lem:PPD}} Let $\varphi(x)$ be in $\Z[x]$, and let $b\in \Z$. If a prime $q$ dividing $\varphi^n(b)$ is not a primitive prime divisor of $\varphi^n(b)$, then \begin{equation}{\label{refinement}}q\mid \varphi^m(b) \tu{ or } q\mid \varphi^m(0)\tu{ for some } 1\le m\le \Bigl\lfloor\frac{n}{2}\Bigr\rfloor.\end{equation} \end{lem}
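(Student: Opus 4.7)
The plan is to show that the smallest index at which $q$ appears in the forward orbit of $b$ is either small (giving the first alternative) or large enough that the remaining iterates from that point onward are essentially an iterate of $\varphi$ applied to $0$ mod $q$ (giving the second alternative).

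First I would let $m_0$ be the smallest positive integer with $q\mid \varphi^{m_0}(b)$. Since $q$ is not a primitive prime divisor of $\varphi^n(b)$, we have $m_0\le n-1$ (treating the convention that primitivity concerns $m\ge 1$; the case $q\mid b$ can be folded in by noting $\varphi^k(b)\equiv\varphi^k(0)\pmod{q}$ for all $k$, reducing to the analogous argument applied to the orbit of $0$).

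Next, the key step: the congruence $\varphi^{m_0}(b)\equiv 0\pmod{q}$ lets us substitute $0$ for $\varphi^{m_0}(b)$ when continuing to iterate. Explicitly,
\begin{equation*}
\varphi^n(b) \;=\; \varphi^{n-m_0}\bigl(\varphi^{m_0}(b)\bigr) \;\equiv\; \varphi^{n-m_0}(0)\pmod{q},
\end{equation*}
so $q\mid \varphi^{n-m_0}(0)$.

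Finally, I invoke a pigeonhole observation: at least one of the two nonnegative integers $m_0$ and $n-m_0$ is at most $\lfloor n/2\rfloor$. If $m_0\le \lfloor n/2\rfloor$, the first alternative $q\mid\varphi^{m_0}(b)$ holds with $m=m_0\ge 1$. Otherwise $m_0>\lfloor n/2\rfloor$ forces $n-m_0\le \lfloor n/2\rfloor$, and since $m_0\le n-1$ we have $n-m_0\ge 1$, so the second alternative $q\mid\varphi^{n-m_0}(0)$ holds with $m=n-m_0$ in the required range.

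There is no real obstacle in this proof; the only subtlety is a convention check on what counts as "not primitive," which we handle by noting that $q\mid b$ reduces the problem to studying the orbit of $0$ modulo $q$, where the same substitution argument applies.
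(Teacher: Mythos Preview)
Your proof is correct and follows essentially the same route as the paper's: both use the key substitution $\varphi^n(b)=\varphi^{n-m}(\varphi^m(b))\equiv\varphi^{n-m}(0)\pmod{q}$ and then observe that one of $m,\,n-m$ lies in $\{1,\dots,\lfloor n/2\rfloor\}$. The paper's version is even terser---it does not bother taking the minimal index $m_0$ (any $m<n$ with $q\mid\varphi^m(b)$ works) and leaves the final pigeonhole step implicit; your parenthetical about $q\mid b$ is an unnecessary aside under the paper's convention that primitivity concerns $m\ge 1$.
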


\begin{proof} Let $q$ be a prime dividing $\varphi^n(b)$, and suppose $q\mid \varphi^m(b)$ for some $m<n$. Then $0=\varphi^n(b)=\varphi^{n-m}(\varphi^m(b))\Mod{q}=\varphi^{n-m}(0)\Mod{q}$. \end{proof}

\begin{rmk} By the proof of Lemma \ref{lem:PPD}, we see that any prime divisor of $\varphi^n(b)$ that does not divide any $\varphi(0),\varphi^2(0),\dots,\varphi^{n-1}(0)$ is a primitive prime divisor of $\varphi^n(b)$. In particular, if $q$ is a prime divisor of $\varphi^n(b)$ such that $q$ does not divide any element of the forward orbit of 0 under $\varphi$, then $q$ is a primitive prime divisor. 

Thus, for the maps $\phi_{(p,k)}$, the prime divisors of $\phi_{(p,k)}^n(a)$ that are coprime to $kp$ are primitive prime divisors. \end{rmk}

\begin{lem}{\label{lem:sign}} Let $\phi_{(d,k)}(x)=x^d+kdx^{d-1}-kd$, where $d\ge 3$ is odd, and $k\in\Z^+$. Let $a=-k(d-1)$. Then $\phi_{(d,k)}^n(a)>0$ for all $n\ge 1$. \end{lem}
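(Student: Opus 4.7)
The plan is to prove, by induction on $n$, the stronger statement that $\phi_{(d,k)}^n(a) \geq 1$ for every $n \geq 1$. The inductive step is essentially free: for any real $y \geq 1$, one has
\[
\phi_{(d,k)}(y) = y^d + kd(y^{d-1} - 1) \geq y^d \geq 1,
\]
since $y^{d-1} - 1 \geq 0$. Thus once the base case $\phi_{(d,k)}(a) \geq 1$ is established, the iterates stay in $[1,\infty)$ and the lemma follows.

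For the base case, I would compute $\phi_{(d,k)}(a)$ directly. Using oddness of $d$ (so that $d-1$ is even), we have $a^{d-1} = (k(d-1))^{d-1} > 0$ and $a^d = -(k(d-1))^d < 0$. Substituting into $\phi_{(d,k)}(x) = x^d + kdx^{d-1} - kd$ and factoring $(k(d-1))^{d-1}$ out of the first two terms yields
\[
\phi_{(d,k)}(a) = (k(d-1))^{d-1}\bigl(kd - k(d-1)\bigr) - kd = k\bigl[(k(d-1))^{d-1} - d\bigr].
\]
It remains to check $(k(d-1))^{d-1} \geq d$ for all $k \geq 1$ and odd $d \geq 3$. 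The weakest case is $k = 1$, where the bound becomes $(d-1)^{d-1} \geq d$; this reads $4 \geq 3$ for $d = 3$ and is trivial for larger odd $d$, since $(d-1)^{d-1} \geq 2^{d-1}$ dwarfs $d$. Hence $\phi_{(d,k)}(a) \geq k \geq 1$, and the induction closes.

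There is no serious conceptual obstacle here; the argument is a short calculation paired with the observation that $\phi_{(d,k)}$ preserves $[1,\infty)$. The only point requiring attention is the sign of $a^d$, which is precisely where the hypothesis that $d$ be odd enters: without it, $a^d$ would be positive rather than negative, and the convenient cancellation producing the factor $k$ in $\phi_{(d,k)}(a)$ would not occur.
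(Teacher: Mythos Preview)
Your proof is correct and follows essentially the same approach as the paper: both compute $\phi_{(d,k)}(a)=k\bigl[(k(d-1))^{d-1}-d\bigr]>0$ directly and then observe that $\phi_{(d,k)}$ sends positive integers (equivalently, $[1,\infty)$) to positive integers, closing the induction. One small write-up point: from $(k(d-1))^{d-1}\ge d$ alone you only get $\phi_{(d,k)}(a)\ge 0$, not $\ge k$; but your check actually shows the inequality is strict (e.g.\ $4>3$), and since $\phi_{(d,k)}(a)\in\Z$ this yields $\phi_{(d,k)}(a)\ge 1$, which is all the induction needs.
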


\begin{proof} We have $\phi_{(d,k)}(y)=y^{d-1}(y+kd)-kd$, so if $|y|<kd$ and $y^{d-1}>kd$ for $y\in\Z$, then $\phi_{(d,k)}(y)>0$. Then $\phi_{(d,k)}^2(y)>0$, and by induction $\phi_{(d,k)}^n(y)>0$ for all $n\ge 1$. Observing that $a$ satisfies $|a|<kd$ and $a^{d-1}>kd$, this completes the proof. \end{proof}

The above lemmas allow us to derive Proposition \ref{prop:trimax}, a key tool in the proof of Theorem \ref{thm:primeOdoni}. It relies heavily on the fact 0 is strictly preperiodic under $\phi_{(p,k)}$; compare Theorem 1.1 of \cite{Jonesdensity}, as well as Theorem 1.2 of \cite{Jones2}.

\begin{prop}{\label{prop:trimax}} Let $\phi_{(p,k)}(x)=x^p+kpx^{p-1}-kp$, where $k\in\Z^+$, $p\ge 3$ is prime, and $p\nmid k$. Fix some $n\ge 1$. If $\phi_{(p,k)}^n(a)\ne ky^2$ for any $y\in\Z$, then $\Gal(K_n/K_{n-1})$ is maximal. \end{prop}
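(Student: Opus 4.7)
The plan is to apply Proposition \ref{prop:max} (and, for $n=1$, the final sentence of Proposition \ref{prop:LNV}) by producing a prime $p_n \mid \phi_{(p,k)}^n(a)$ that is coprime to $kp$ and appears with odd multiplicity. By the Remark after Lemma \ref{lem:PPD}, any such $p_n$ is automatically a primitive prime divisor of $\phi^n(a)$, so $p_n \nmid \phi^m(a)$ for $m < n$. The only other critical point is $0$, which is strictly preperiodic with $\phi^m(0) = -kp$ for all $m \ge 1$, and $B = -kp$ is itself fixed by $\phi$. Hence coprimality of $p_n$ with $kp$ automatically verifies the remaining divisibility hypotheses of Propositions \ref{prop:LNV} and \ref{prop:max}, reducing the whole argument to exhibiting such a prime.

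The heart of the proof is a local computation of $v_q(\phi^n(a))$ for primes $q \mid kp$. Since $\phi(y) = y^p + kpy^{p-1} - kp \equiv y^p \pmod p$, Fermat's little theorem gives $\phi(y) \equiv y \pmod p$, so iterating yields $\phi^n(a) \equiv a = -k(p-1) \not\equiv 0 \pmod p$, and $v_p(\phi^n(a)) = 0$. For a prime $q \mid k$ with $q \ne p$, I argue by induction that $v_q(\phi^n(a)) = v_q(k)$: the base case uses $a + kp = k$ to write $\phi(a) = k\bigl(k^{p-1}(p-1)^{p-1} - p\bigr)$, where the parenthesized factor is a $q$-adic unit. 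For the inductive step, setting $y = \phi^{n-1}(a)$ with $v_q(y) = v_q(k) \ge 1$, the identity $\phi^n(a) = y^{p-1}(y+kp) - kp$ together with $v_q\bigl(y^{p-1}(y+kp)\bigr) \ge p\,v_q(k) > v_q(kp)$ forces $v_q(\phi^n(a)) = v_q(kp) = v_q(k)$. Combined with $\phi^n(a) > 0$ from Lemma \ref{lem:sign}, this shows $\phi^n(a) = k \cdot M$ for some positive integer $M$ coprime to $kp$.

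The hypothesis $\phi^n(a) \ne ky^2$ now forces $M$ to fail to be a perfect square, so some prime $p_n$ divides $M$, and hence $\phi^n(a)$, to odd multiplicity. Since $p_n$ is coprime to $kp$, Proposition \ref{prop:LNV} produces a transposition in $\Gal(M_i/K(\alpha_i))$ for each $i$. Each $\phi_{(p,k)}^n(x)$ is Eisenstein at $p$ and thus irreducible over $\Q$, so Capelli's lemma makes $\phi(x) - \alpha_i$ irreducible over $K(\alpha_i)$, rendering $\Gal(M_i/K(\alpha_i))$ a transitive subgroup of $S_p$. Because $p$ is prime, any transitive subgroup of $S_p$ containing a transposition equals $S_p$, so $\Gal(M_i/K(\alpha_i)) \cong S_p$, and Proposition \ref{prop:max} gives the maximality of $\Gal(K_n/K_{n-1})$ for $n \ge 2$; the $n=1$ case is identical after substituting $K_1/K$ for $M_i/K(\alpha_i)$. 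The main obstacle is the valuation bookkeeping: one must identify precisely which primes contribute to $\phi^n(a)$ beyond $k$ itself, so that the condition $\phi^n(a) \ne ky^2$ cleanly translates into the existence of an odd-multiplicity prime outside the forward orbits of the critical points.
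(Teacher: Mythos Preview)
Your proof is correct and follows essentially the same approach as the paper: produce an odd-multiplicity prime divisor of $\phi^n(a)$ outside $\operatorname{Supp}(kp)$ via the valuation computations and Lemma~\ref{lem:sign}, invoke the Remark after Lemma~\ref{lem:PPD} to get primitivity, then apply Propositions~\ref{prop:LNV} and~\ref{prop:max} together with Jordan's theorem (transitive subgroup of $S_p$ containing a transposition equals $S_p$). The only difference is that you spell out in detail the inductive verification that $v_q(\phi^n(a))=v_q(k)$ for $q\mid k$ and $v_p(\phi^n(a))=0$, whereas the paper simply asserts these facts.
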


\begin{proof} First note that $\phi_{(p,k)}(x)$ and all of its iterates are Eisenstein at $p$, and hence irreducible over $\Q$. By Capelli's lemma, it follows that for any $n\ge 2$, $\phi_{(p,k)}(x)-\alpha_i$ is irreducible over $\Q(\alpha_i)$ for any root $\alpha_i$ of $\phi_{(p,k)}^{n-1}$. Furthermore, if $q\mid k$ is prime, then $v_q(\phi_{(p,k)}^n(a))=v_q(k)$ for all $n\ge 1$. We also have $(\phi_{(p,k)}^n(a),p)=1$. By these facts, and by Lemma \ref{lem:sign}, having $\phi_{(p,k)}^n(a)\ne ky^2$ for any $y\in\Z$ implies that some prime divisor $p_n$ outside $\tu{Supp}(kp)$ divides $\phi_{(p,k)}^n(a)$ to odd multiplicity; this $p_n$ must be a primitive prime divisor. Proposition \ref{prop:LNV} then implies that $\Gal(M_i/\Q(\alpha_i))$ contains a transposition. A theorem of Jordan \cite{Jordan} states that if $H$ is a transitive subgroup of $S_p$, where $p$ is prime, which contains a transposition, then $H=S_p$. As $\phi_{(p,k)}(x)-\alpha_i$ is irreducible over $\Q(\alpha_i)$, $\Gal(M_i/\Q(\alpha_i))$ is a transitive subgroup of $S_d$. Hence $\Gal(M_i/\Q(\alpha_i))\cong S_p$.
Moreover, this prime $p_n$ dividing $\phi_{(p,k)}^n(a)$ to odd multiplicity clearly does not divide any $\phi_{(p,k)}^m(0)$ with $m\le n$, as the forward orbit of 0 under $\phi_{(p,k)}$ is $\{-kp\}$. Therefore, by Proposition \ref{prop:max}, $K_n/K_{n-1}$ is maximal for all $n\ge 2$.

Now let $n=1$. In the notation of Proposition \ref{prop:LNV}, $d=p$, $A=kp$, $b=0$, $\varphi^0(B)=B=-kp$. Thus $\phi_{(p,k)}(a)\ne ky^2$ for any $y\in\Z$ implies that $G_1(\phi_{(p,k)})$ is maximal. \end{proof}

\begin{rmk} If the hypothesis of Proposition \ref{prop:trimax} is satisfied for infinitely many $n$, then it follows automatically from the proof of the proposition that $a$ has infinite forward orbit under $\phi_{(p,k)}(x)$, so we do not need to take this property into account. \end{rmk}

We now prove Theorem \ref{thm:primeOdoni}.

\begin{proof}[Proof of Theorem \ref{thm:primeOdoni}] Suppose $p\ge 5$, and $k\equiv 1\Mod{3}$. The finite critical points of $\phi_p$ are 0 and $a=-(p-1)k$, of multiplicities $p-2$ and 1 respectively. Clearly, the forward orbit of 0 under $\phi_{(p,k)}$ is $\{-kp\}$. Since $(p,k)=1$, showing that $K_n/K_{n-1}$ is maximal amounts to showing that $\phi_{(p,k)}^n(a)\ne ky^2$ in $\Q$. Note also that \[p\equiv 1\Mod{3}\llra a\equiv 0\Mod{3}\] and \[p\equiv -1\Mod{3}\llra a\equiv -1\Mod{3}.\]
In both cases, it is easy to check that $\phi_{(p,k)}(a)\equiv-1\Mod{3}$. Since $-1$ is a fixed point modulo 3 in both cases, this implies that for all $n\ge 1$, $\phi_{(p,k)}^n(a)$ is not of the form $ky^2$ in $\Z$, $k\equiv 1\Mod{3}$. Hence $\Gal(K_n/K_{n-1})$ is maximal for all $n\ge 1$. 

Now let $p=3$. Then taking $k=2$ yields the map $\phi_{(3,2)}(x)=x^3+6x^2-6$. By Proposition \ref{prop:trimax}, for $n\ge 1$, $K_n/K_{n-1}$ is maximal as long as $\phi_{(3,2)}^n(a)\ne 2z^2$ for $z\in\Z$. Multiplying both sides by 2, and substituting $y=2z$, we see that it suffices to show that the elliptic curve with affine equation \[\mathcal{C}_1: y^2=2(x^3+6x^2-6)\] does not have integral points with $x$-coordinate equal to $\phi_{(3,2)}^n(a)$ for some $n$. This curve is isomorphic over $\Q$ to \[\mathcal{C}_2: y^2=x^3+48x^2-3072\] via the change of coordinates taking a solution $(x,y)$ of $\mathcal{C}_1$ to $(8x,16y)$ on $\mathcal{C}_2$. Thus, to find integral points on $\mathcal{C}_1$, it is enough to find integral points on $\mathcal{C}_2$. But a \texttt{\texttt{\texttt{Magma}}} \cite{Magma} search reveals that there are no nontrivial integral points on the projectivization of $\mathcal{C}_2$. Therefore $K_n/K_{n-1}$ is maximal for all $n$, so $G_{\Q}(\phi_{3,2})=\tu{Aut}(T_{\infty})$. This completes the proof of Odoni's conjecture in all prime degrees. \end{proof}

\begin{proof}[Proof of Corollary \ref{cor:density}] As already noted, $\phi_{(p,k)}^n(x)$ is Eisenstein at $p$ for all $n\ge 1$; hence the iterates of $\phi_{(p,k)}$ are all irreducible over $\Q$. Moreover, if $a_0$ has infinite forward orbit under $\phi_{(p,k)}$, then for all $n$ we have $\phi_{(p,k)}^n(a_0)\ne 0$. Theorem 4.1 of \cite{Jones} immediately completes the proof. \end{proof}

Proposition \ref{prop:trimax} allows us to easily deduce Theorem \ref{thm:primefinindex}.

 \begin{proof}[Proof of Theorem \ref{thm:primefinindex}] By Proposition \ref{prop:trimax}, having $\phi_{(p,k)}^n(a)\ne ky^2$ for $y\in \Z$ implies that $\Gal(K_n/K_{n-1})\cong S_p^m$. By Siegel's theorem, since the orbit of $a$ under $\phi_{(p,k)}(x)$ is infinite, there are finitely many $n$ such that $\phi^n(a)$ is of this form. \end{proof}
 
Next, we use the proof of Proposition \ref{prop:trimax} to prove Theorem \ref{thm:index2}.
 
 \begin{proof}[Proof of Theorem \ref{thm:index2}] The critical point of $\phi$ is $a=-14/3$.  It is easy to see that $v_7(\phi^n(a))=1$ for all $n$, and that $v_3(\phi^n(a))$ is odd for all $n$. Since $v_3(\phi^n(a))\to-\infty$ as $n\to\infty$, it follows that $a$ has infinite forward orbit under $\phi$. One checks that $\phi^2(a)>1$, and thus by induction $\phi^n(a)>1>0$ for all $n\ge 1$. Moreover, $\phi^n(a)$ is an $S$-integer for all $n$, where $S=\{3,\infty\}$.
 
Since $\phi^n(0)=-7$ for all $n\ge 1$, we deduce as in the proof of Lemma \ref{lem:PPD} that any prime divisor of the numerator of $\phi^n(a)$ that is coprime to $7$ does not divide the numerator of any $\phi^m(a)$, $m\le n-1$. Therefore, by the proof of Proposition \ref{prop:trimax}, $K_n/K_{n-1}$ is maximal for any $n\ge 1$ as long as $\phi^n(a)\ne 7\cdot3y^2=21y^2$ for any $y\in\Z_S$. A computation in \texttt{\texttt{Magma}} shows that the elliptic curves with affine models \[\mathcal{C}_3: y^2=21(x^3+7x^2-7)\] and \[\mathcal{C}_4: y^2 = x^3 + 583443x^2 - 4053211077702843\] are isomorphic over $\Q$, via the coordinate change taking a solution $(x,y)$ on $\mathcal{C}_3$ to $(3^57^3x,3^77^4y)$ on $\mathcal{C}_4$. The set of $S$-integral points on $\mathcal{C}_3$ is thus contained in the set of $S$-integral points of $\mathcal{C}_4$. There are exactly three pairs of $S$-integral points of $\mathcal{C}_4$; their inverse images in $\mathcal{C}_3$ are: \[(-206/189, \pm 377/567)\] \[(7/3, \pm 91/3)\] \[(-14/3,\pm 91/3).\] The points $(-14/3, \pm 91/3)$ correspond to $21\cdot \phi(a)=(\pm\frac{91}{3})^2$; indeed, $G_1$ is not maximal, and $G_1(\phi)\cong \Z/3\Z$, an index 2 subgroup of $S_3$. Clearly, $\phi^n(a)$ never equals $7/3$, and $-206/189$ is not an $S$-integer. Therefore $\Gal(K_n/K_{n-1})$ is maximal for all $n\ge 2$, and we conclude that $G_{\Q}(\phi)$ has index 2 in $\tu{Aut}(T_{\infty})$. \end{proof}

 \section{Double Transitivity}
 
In this section, we generalize a specific case addressed by Theorem 1.3 in the article \cite{Salinier1} by showing that it holds in relative extensions of number fields. 

Let $F$ be a number field, with $\mathcal{O}_F$ its ring of integers. For a prime $\frak{p}$ of $\mathcal{O}_F$, let $v_{\frak{p}}$ be the normalized valuation corresponding to $\frak{p}$, and let $F_{\frak{p}}$ be the completion of $F$ with respect to $v_{\frak{p}}$. Let $f(x)=x^d+Ax^{d-1}+B\in O_F[X]$ be irreducible over $F$, with $d\ge 3$. Let $L$ be the splitting field of $f(x)$ over $F$. For a prime $\frak{p}$ in $\mathcal{O}_F$, we fix a prime $\frak{P}$ of $\mathcal{O}_L$ above $\frak{p}$ and let $L_{\frak{P}}$ be the completion of $L$ with respect to the $\frak{P}$-adic valuation. 

The main result of this section is the following.

\begin{thm}{\label{thm:2trans}} Suppose there exists a prime $\frak{p}$ of $\mathcal{O}_F$ dividing $(B)$ but not $(A)$ such that $(d-1,v_{\frak{p}}(B))=1$. Then $\Gal(L/F)$ contains a subgroup $H$ acting transitively on $d-1$ roots of $f(x)$ and fixing the remaining root.  Hence, $\Gal(L/F)$ is a doubly transitive subgroup of $S_d$. \end{thm}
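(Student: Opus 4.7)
The plan is to exhibit the required subgroup $H$ as a decomposition group at the prime $\frak{p}$, by reading off the $\frak{p}$-adic factorization of $f(x)$ from the shape of its Newton polygon at $\frak{p}$.

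First I would reduce $f(x)$ modulo $\frak{p}$. Since $\frak{p}\mid B$ and $\frak{p}\nmid A$, the reduction is
\[
f(x)\equiv x^{d-1}(x+A)\pmod{\frak{p}},
\]
a product of coprime factors $x^{d-1}$ and $x+A$ in $(\mathcal{O}_F/\frak{p})[x]$. Hensel's lemma then produces a factorization $f(x)=g(x)h(x)$ in $F_{\frak{p}}[x]$ with $h$ monic linear (reducing to $x+A$) and $g$ monic of degree $d-1$ (reducing to $x^{d-1}$). In particular, one root $\beta$ of $f$ lies in $F_{\frak{p}}$, while the remaining $d-1$ roots are the roots of $g$.

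The next step is to show that $g$ is irreducible over $F_{\frak{p}}$. Writing $g(x)=x^{d-1}+\cdots+B'$, the fact that $h(0)$ is a $\frak{p}$-adic unit (since $h(0)\equiv A\pmod{\frak{p}}$ and $\frak{p}\nmid A$) together with $B'h(0)=B$ yields $v_{\frak{p}}(B')=v_{\frak{p}}(B)$. Because $g(x)\equiv x^{d-1}\pmod{\frak{p}}$, all non-leading coefficients of $g$ lie in $\frak{p}\mathcal{O}_{F,\frak{p}}$, so the Newton polygon of $g$ at $\frak{p}$ consists of a single segment from $(0,v_{\frak{p}}(B'))$ to $(d-1,0)$. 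This segment has slope $-v_{\frak{p}}(B)/(d-1)$ which is in lowest terms by the coprimality hypothesis $(d-1,v_{\frak{p}}(B))=1$. Standard Newton polygon theory then forces $g$ to be irreducible over $F_{\frak{p}}$.

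Now fix a prime $\frak{P}$ of $\mathcal{O}_L$ above $\frak{p}$ and let $H=D(\frak{P}/\frak{p})=\Gal(L_{\frak{P}}/F_{\frak{p}})\subseteq\Gal(L/F)$. The action of $H$ on the roots of $f$ respects the $F_{\frak{p}}$-factorization $f=gh$: it fixes the unique root $\beta$ of $h$ (which lies in $F_{\frak{p}}$), and it acts on the roots of $g$. Since $g$ is irreducible over $F_{\frak{p}}$, this latter action is transitive, giving the desired subgroup $H$. Finally, for 2-transitivity, irreducibility of $f$ over $F$ makes $\Gal(L/F)$ transitive on the $d$ roots, and $H\subseteq\mathrm{Stab}_{\Gal(L/F)}(\beta)$ shows that this stabilizer is transitive on the remaining $d-1$ roots, which is exactly 2-transitivity.

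The only nontrivial step is the irreducibility of $g$ from the Newton polygon, but this is a standard consequence of the one-slope-with-coprime-numerator-and-denominator criterion, so I expect no real obstacle; the coprimality hypothesis $(d-1,v_{\frak{p}}(B))=1$ is tailored precisely to make this argument go through.
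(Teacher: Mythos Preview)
Your approach is essentially the paper's: Hensel-factor $f=gh$ over $F_{\frak p}$, prove $g$ is irreducible, and take $H$ to be the decomposition group $\Gal(L_{\frak P}/F_{\frak p})$. The only difference is in how you argue irreducibility of $g$. The paper (in the lemma immediately preceding the theorem) shows directly that $F_{\frak p}(\beta)/F_{\frak p}$ is totally ramified of degree $d-1$ for any root $\beta$ of $g$: from $f(\beta)=0$ and $w(\beta)>0$ one reads off $(d-1)\,w(\beta)=e\,v_{\frak p}(B)$, and the coprimality hypothesis forces $(d-1)\mid e$, hence $e=d-1=[F_{\frak p}(\beta):F_{\frak p}]$.

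Your Newton-polygon version has a small gap as written. From $g(x)\equiv x^{d-1}\pmod{\frak p}$ you only get $v_{\frak p}(a_i)\ge 1$ for the non-leading coefficients; when $v_{\frak p}(B)\ge 2$ this does \emph{not} by itself force the lower convex hull to be the single segment from $(0,v_{\frak p}(B))$ to $(d-1,0)$, since an intermediate point $(i,1)$ can lie strictly below that line. The fix is short and is precisely the paper's computation: every root $\alpha$ of $g$ is a root of $f$ with positive valuation, so $\alpha^{d-1}(\alpha+A)=-B$ with $\alpha+A$ a $\frak p$-adic unit, giving every root the common valuation $v_{\frak p}(B)/(d-1)$; hence the Newton polygon of $g$ really is a single segment of that slope, and then your one-slope-in-lowest-terms criterion applies. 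Once you insert this, the two arguments coincide.
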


To prove Theorem \ref{thm:2trans}, we first prove a lemma. Let $\frak{p}$ be a prime divisor of $(B)$ but not $(A)$. By the general form of Hensel's lemma (see p.129 of \cite{Neukirch}), we have 

\[f(x)=g(x)h(x)\] over the valuation ring of $F_{\frak{p}}$, where \[g(x)\equiv x^{d-1}\Mod{\frak{p}} \tu{ and } h(x)\equiv x+A\Mod{\frak{p}}\] with $\tu{deg}(g(x))=d-1$, $\tu{deg}(h(x))=1$ over $F_{\frak{p}}$. 

\begin{lem}{\label{lem:Sal3}} Let $\frak{p}$ be as above, and suppose that $(d-1,v_{\frak{p}}(B))=1$. Then, for each root $\beta$ of $g(x)$, the extension $F_{\frak{p}}(\beta)/F_{\frak{p}}$ is totally ramified, and $g(x)$ is irreducible over $F_{\frak{p}}$. \end{lem}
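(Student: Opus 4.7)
The plan is to exploit the trinomial structure of $f(x)$ to pin down the $\mathfrak{p}$-adic valuations of every coefficient of $g(x)$, and then read off the conclusions from the Newton polygon of $g$. First I would write $h(x) = x + c$ with $c \in F_{\mathfrak{p}}$ and $c \equiv A \pmod{\mathfrak{p}}$; since $\mathfrak{p} \nmid (A)$, the element $A$ is a $\mathfrak{p}$-adic unit, so $v_{\mathfrak{p}}(c) = 0$. Expand $g(x) = x^{d-1} + b_{d-2} x^{d-2} + \cdots + b_1 x + b_0$ with the $b_i$ in the valuation ring of $F_{\mathfrak{p}}$.

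Comparing coefficients in $f(x) = g(x) h(x)$ and using that $f$ has no monomials of degree $1$ through $d-2$, I would obtain the recursion $b_{i-1} = -c\, b_i$ for $1 \le i \le d-2$, and hence $b_i = (-c)^{d-2-i} b_{d-2}$ for every $0 \le i \le d-2$. The constant-term relation $c b_0 = B$ then gives $v_{\mathfrak{p}}(b_0) = v_{\mathfrak{p}}(B)$, and together with $v_{\mathfrak{p}}(c) = 0$ this forces the uniform identity $v_{\mathfrak{p}}(b_i) = v_{\mathfrak{p}}(B)$ for every $0 \le i \le d-2$.

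With the coefficient valuations fixed, the Newton polygon of $g(x)$ over $F_{\mathfrak{p}}$ collapses to the single segment from $(0, v_{\mathfrak{p}}(B))$ to $(d-1, 0)$, since each interior lattice point $(i, v_{\mathfrak{p}}(B))$ lies strictly above that segment. Consequently every root $\beta$ of $g$ satisfies $v(\beta) = v_{\mathfrak{p}}(B)/(d-1)$, where $v$ denotes the unique extension of $v_{\mathfrak{p}}$ to $\overline{F}_{\mathfrak{p}}$. Letting $e$ be the ramification index of $F_{\mathfrak{p}}(\beta)/F_{\mathfrak{p}}$, the requirement $e \cdot v(\beta) \in \Z$ combined with $\gcd(d-1, v_{\mathfrak{p}}(B)) = 1$ forces $(d-1) \mid e$. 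But $e \le [F_{\mathfrak{p}}(\beta):F_{\mathfrak{p}}] \le \deg g = d-1$, so $e = [F_{\mathfrak{p}}(\beta):F_{\mathfrak{p}}] = d-1$, which proves simultaneously that $g$ is irreducible over $F_{\mathfrak{p}}$ and that $F_{\mathfrak{p}}(\beta)/F_{\mathfrak{p}}$ is totally ramified.

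The one step that is not pure bookkeeping is the uniform-valuation identity $v_{\mathfrak{p}}(b_i) = v_{\mathfrak{p}}(B)$, which is forced by the trinomial shape of $f$; this collapses the Newton polygon into a single \emph{pure} segment whose endpoints have coprime coordinates. Without this rigidity the polygon could split, and the coprimality hypothesis alone would not suffice to conclude irreducibility. Once this identity is spotted, the rest of the argument is standard local-field machinery.
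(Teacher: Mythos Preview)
Your argument is correct, and its endgame coincides with the paper's: once you know $v(\beta)=v_{\mathfrak{p}}(B)/(d-1)$, the coprimality hypothesis forces $(d-1)\mid e$, and the chain $e\le [F_{\mathfrak{p}}(\beta):F_{\mathfrak{p}}]\le d-1$ gives both total ramification and irreducibility of $g$.

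Where you differ is in how the valuation of $\beta$ is extracted. You compute every coefficient of $g$ via the recursion $b_{i-1}=-c\,b_i$, pin each $b_i$ at $v_{\mathfrak{p}}(B)$, and then read the root valuation off the resulting one-segment Newton polygon. The paper bypasses this entirely: since $\beta$ is a root of $g\equiv x^{d-1}\pmod{\mathfrak{p}}$ one has $w(\beta)>0$ for the normalized valuation $w$ on $F_{\mathfrak{p}}(\beta)$, and since $\beta$ is also a root of $f$ one has $\beta^{d-1}(\beta+A)=-B$, whence $(d-1)w(\beta)=w(B)=e\,v_{\mathfrak{p}}(B)$ in one line, using only that $A$ is a $\mathfrak{p}$-adic unit. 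Your coefficient computation and Newton-polygon step are thus a detour to a conclusion already available directly from $f(\beta)=0$; what your route does buy is an explicit description of $g$, though the lemma does not need it.
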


\begin{proof} Let $w$ be the normalized valuation of the local field $F_{\frak{p}}(\beta)$. Then $w(\frak{p})=e$, where $e$ is the ramification index of $F_{\frak{p}}(\beta)/F_{\frak{p}}$. Since $g(x)\equiv x^{d-1}\Mod{\frak{p}}$, we have $w(\beta)>0$, and since $f(\beta)=\beta^d+A\beta^{d-1}+B=0$, \[(d-1)w(\beta)=w(B)=ev_{\frak{p}}(B)\] By hypothesis, $(d-1,v_{\frak{p}}(B))=1$, so this forces $d-1$ to divide $e$. On the other hand, we see that \[e\le [F_{\frak{p}}(\beta):F_{\frak{p}}]\le d-1=\tu{deg}(g(x))\] meaning $F_{\frak{p}}(\beta)/F_{\frak{p}}$ is a totally ramified extension of $F_{\frak{p}}$, and $g(x)$ is irreducible over $F_{\frak{p}}$. \end{proof}

We now prove Theorem \ref{thm:2trans}.

\begin{proof}[Proof of Theorem \ref{thm:2trans}] By the previous lemma, $g(x)$ is irreducible over $F_{\frak{p}}$. Thus $\Gal(L_{\frak{p}}/F_{\frak{p}})$ is transitive on the roots of $g(x)$. Clearly, $\Gal(L_{\frak{p}}/F_{\frak{p}})$ is not supported on the root of $h(x)$, as $h(x)$ is linear with its root lying in $F_{\frak{p}}$. Viewing $\Gal(L/F)$ as a group acting on the set $X$ of roots of $f$, this proves the existence of an $x\in X$ such that the stabilizer of $x$ acts transitively on the remaining $d-1$ elements of $X$. The irreducibility of $f$ then implies that this property holds for any $x\in X$. Since $|X|\ge 3$, this is equivalent to $\Gal(L/F)$ being a doubly transitive subgroup of $S_d$ \cite{Rotman}. \end{proof}

\section{Application of Vojta's conjecture}

Throughout this section, we work with the family of polynomials $\phi_{(d,c)}(x)=x^d-cdx^{d-1}+c(d-1)\in\Z[x]$ with $c\in\Z^+$, $d\ge 3$. We often abbreviate $\phi_{(d,c)}(x)$ as $\phi_c(x)$. Let $H:\overline{\Q}\to\mathbb{R}_{\ge 1}$ denote the absolute multiplicative height, and $h:\overline{\Q}\to\mathbb{R}_{\ge 0}$ the absolute logarithmic height of an algebraic number (see \cite{Bombieri} for definitions). Although the base field is $\Q$ throughout, we use the notation of heights, as the ideas in these proofs can be generalized to arbitrary number fields.

\begin{lem}{\label{lem:orb}} Let $\phi_{(d,c)}(x)=x^d-cdx^{d-1}+c(d-1)$ be as above. Then the forward orbit of 0 under $\phi_{(d,c)}$ is infinite. \end{lem}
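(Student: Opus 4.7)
The plan is to show $|\phi_{(d,c)}^n(0)| \to \infty$, which immediately implies the forward orbit of $0$ is infinite. The argument has two ingredients: an archimedean ``escape radius'' estimate for the map, and a direct check that the orbit of $0$ enters the escape region after at most three steps.

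For the escape estimate, fix $R := 2cd$ (so $R \geq 6$ since $d \geq 3$ and $c \geq 1$). Writing $\phi_{(d,c)}(y) = y^{d-1}(y - cd) + c(d-1)$, if $|y| \geq R$ then
\[
|\phi_{(d,c)}(y)| \;\geq\; |y|^{d-1}\bigl(|y|-cd\bigr) - c(d-1) \;\geq\; cd\cdot|y|^{d-1} - |y|/2 \;\geq\; 2|y|,
\]
using $c(d-1) \leq cd \leq |y|/2$ together with $cd \geq 3$ and $|y|^{d-1} \geq |y|$. Hence $|\phi_{(d,c)}^n(y)| \to \infty$ whenever $|y| \geq R$.

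For the second ingredient, observe that $\phi_{(d,c)}'(x) = dx^{d-2}(x - c(d-1))$, so the nonzero critical point is exactly $c(d-1) = \phi_{(d,c)}(0)$. Therefore
\[
\phi_{(d,c)}^2(0) \;=\; \phi_{(d,c)}\bigl(c(d-1)\bigr) \;=\; c(d-1) - c^d(d-1)^{d-1},
\]
of absolute value $c^d(d-1)^{d-1} - c(d-1)$. A short comparison with $R = 2cd$ shows $|\phi_{(d,c)}^2(0)| \geq R$ in every case except $(c,d) = (1,3)$; for this lone exception, one more iteration suffices, since $\phi_{(3,1)}^3(0) = \phi_{(3,1)}(-2) = -18 > 6 = R$.

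Combining the two ingredients, some iterate $\phi_{(d,c)}^{n_0}(0)$ lies outside the escape radius, and then $|\phi_{(d,c)}^n(0)| \to \infty$, so the forward orbit of $0$ is infinite. The only obstacle is the small-parameter verification in the second step, which is elementary arithmetic; the dynamical escape estimate is standard.
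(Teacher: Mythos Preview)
Your argument is correct and follows essentially the same strategy as the paper: verify directly that the second iterate (or third, in the lone exceptional case $(c,d)=(1,3)$) is already large, then invoke an archimedean escape estimate to conclude that the orbit tends to infinity. Two small remarks: first, the inequality ``$-18 > 6$'' should read $|{-18}| > 6$; second, the paper's version tracks the cofactor $y$ in $\phi_c^n(0)=c(d-1)y$ to obtain the sharper growth bound $H(\phi_c^{n+1}(0)) \ge (c(d-1))^{(d-1)^{n-3}}$, which is quoted later in the proof of Theorem~\ref{thm:abcOdoni}, so your doubling bound, while sufficient for the lemma as stated, would need to be upgraded there.
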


\begin{proof} For all $n\ge 1$, $\phi_{(d,c)}^n(0)=c(d-1)\cdot y$, for some $y\in\Z$. It is easily checked that \[\phi_c^{n+1}(0)=c(d-1)[c^{d-1}(d-1)^{d-2}y^{d-1}(y(d-1)-d)+1].\]  Thus when $H(y)\ge 2$, we have \[H(\phi_c^{n+1}(0))\ge c^d(d-1)^{d-1}H(y^{d-1})=c\cdot H(\phi_c^n(0))^{d-1}.\] Noting that $\phi_{(d,c)}^2(0)=c(d-1)[1-c^{d-1}(d-1)^{d-2}]$, it is clear that $H(\phi_{(d,c)}^2(0))\le c(d-1)$ only when $c=1$, $d=3$. One computes that \[\phi_{(3,1)}^3(0)=-18=c(d-1)y\] for $H(y)=9>2$. Therefore, for all $n\ge 3$, \begin{equation} H(\phi_c^{n+1}(0))\ge c\cdot H(\phi_c^n(0))^{d-1}\ge H(\phi_c^n(0))^{d-1}\end{equation} which yields \begin{equation}{\label{lowerbound}} H(\phi_c^{n+1}(0))=H(\phi_c^n(c(d-1)))\ge (c(d-1))^{(d-1)^{n-3}}.\end{equation}

In particular, the forward orbit of 0 under $\phi_{(d,c)}$ is infinite. \end{proof}

For notational clarity, we now denote by $C$ an indeterminate, and by $c$ a specific value of $C$.

\begin{lem}{\label{lem:simpleroots}} Fix $d\ge 3$. The polynomial $\phi_{(C,n)}(0):=\phi_{(C,d)}^n(x)\mid_{x=0}\in \Z[C]$ has simple roots in $\overline{\Q}$ for all $n\ge 1$. \end{lem}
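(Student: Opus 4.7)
We argue by induction on $n$, writing $F_n(C) := \phi_{(C,d)}^n(0) \in \Z[C]$ and using the recursion $F_{n+1}(C) = F_n(C)^d - Cd\,F_n(C)^{d-1} + C(d-1)$. The base case $F_1(C) = (d-1)C$ is squarefree. A quick induction on $n$ using the recursion gives $F_n(C) \equiv (d-1)C \pmod{C^2}$ for every $n \ge 1$, so $C = 0$ is always a simple root of $F_n$; it therefore suffices to show every nonzero root is simple.

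Let $c_0 \in \overline{\QQ}$ with $c_0 \ne 0$ be a root of $F_n$. Set $\beta_k = F_k(c_0) = \phi_{(c_0,d)}^k(0)$, and let $p$ be the minimal positive integer with $\beta_p = 0$; then $p \mid n$. Differentiating the recursion yields
\[F_{k+1}'(c_0) = \phi_{(c_0,d)}'(\beta_k)\,F_k'(c_0) + \bigl[(d-1) - d\beta_k^{d-1}\bigr],\]
with $\phi_{(c_0,d)}'(y) = dy^{d-2}(y - c_0(d-1))$. Whenever $\beta_k = 0$ both terms on the right simplify to give $F_{k+1}'(c_0) = d-1$. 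An easy induction using the periodicity $\beta_{p+k} = \beta_k$ then shows $F_n'(c_0) = F_p'(c_0)$ whenever $p \mid n$, so the problem reduces to the exact-period case $n = p$, in which $\beta_1, \ldots, \beta_{n-1}$ are all nonzero.

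For the exact-period case, I would apply the discriminant formula (\ref{discformula}) to $\psi = \phi_C$ with $t = 0$: since $\phi_C$ has critical points $0$ (multiplicity $d-2$) and $C(d-1)$ (multiplicity $1$), with $\phi_C^k(0) = F_k(C)$ and $\phi_C^k(C(d-1)) = F_{k+1}(C)$, the formula becomes
\[\tu{Disc}_x\bigl(\phi_C^{n-1}(x)\bigr) = \pm\,d^{(n-1)d^{n-1}}\,F_1^{(d-2)d^{n-2}}\prod_{k=2}^{n-1}F_k^{2(d-1)d^{n-1-k}}\cdot F_n,\]
in which $F_n$ appears to exponent exactly $1$. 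In the exact-period case the induction hypothesis together with $\beta_k \ne 0$ for $1 \le k \le n-1$ yield $F_1(c_0), \ldots, F_{n-1}(c_0) \ne 0$, so the multiplicity of $c_0$ as a root of $F_n$ equals the order of vanishing of the discriminant at $c_0$. A local analysis shows that the only coincidence of roots of $\phi_{(c_0,d)}^{n-1}(x) = 0$ at $C = c_0$ occurs at $x = \beta_1 = c_0(d-1)$, where two roots collide: $\beta_1$ is a critical point of $\phi_{(c_0,d)}$ of multiplicity one, and no $\beta_j$ with $2 \le j \le n-1$ is critical (if $\beta_j = \beta_1$, the orbit would cycle, contradicting $\beta_n = 0$).

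The main obstacle is the final transversality step: showing this double coincidence contributes exactly order $1$ (and not higher) to the discriminant, equivalently that
\[\partial_C \phi_C^{n-1}(x)\Big|_{(x,\, C) = (c_0(d-1),\, c_0)} \ne 0.\]
I would attack this either via a Thurston-rigidity-type argument (the super-attracting periodic locus for the critical point $0$ is transverse to the parameter direction in this one-parameter family, in which the two critical orbits of $\phi_C$ merge after a single step), or by a direct computation of $F_n'(c_0)$ using the iterated chain-rule formula, leveraging the specific constraints on the orbit $\beta_0, \ldots, \beta_n$ in the exact-period case to argue that the resulting sum of terms cannot cancel.
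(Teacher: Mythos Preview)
Your proposal is incomplete, and the discriminant detour is in fact circular. After your (correct) chain-rule reduction to the exact-period case, you reduce to the transversality condition
\[
\partial_C\,\phi_C^{n-1}(x)\Big|_{(x,C)=(c_0(d-1),c_0)}\ne 0.
\]
But write $G(x,C)=\phi_C^{n-1}(x)$, so that $F_n(C)=G((d-1)C,C)$. Then
\[
F_n'(c_0)=(d-1)\,G_x\bigl((d-1)c_0,c_0\bigr)+G_C\bigl((d-1)c_0,c_0\bigr),
\]
and $G_x((d-1)c_0,c_0)=(\phi_{c_0}^{n-1})'(\beta_1)=0$ because $\beta_1=c_0(d-1)$ is a critical point of $\phi_{c_0}$. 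Hence your transversality condition is \emph{exactly} $F_n'(c_0)\ne 0$, which is the statement you are trying to prove. The discriminant identity merely restates the problem; neither a degree count nor the local collision analysis gives independent leverage here.

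The paper's argument avoids this entirely by a one-line reduction modulo $d$, and in fact you have already written down the key formula. First, an easy induction shows the leading coefficient of $F_n(C)$ is $\pm(d-1)^m$ for some $m\ge 1$; hence every root $c_0$ lies in the ring $\overline{\Z}_S$ of $S$-integers in $\overline{\Q}$, where $S$ consists of the infinite place and the primes dividing $d-1$. Now take your own recursion
\[
F_{k+1}'(c_0)=\phi_{c_0}'(\beta_k)\,F_k'(c_0)+\bigl[(d-1)-d\beta_k^{d-1}\bigr]
\]
and reduce modulo $d\overline{\Z}_S$: since $\phi_{c_0}'(y)=dy^{d-2}(y-c_0(d-1))$ and all quantities are $S$-integral, the first term vanishes mod $d$ and the bracket is $\equiv -1$. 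Thus $F_{k+1}'(c_0)\equiv -1\pmod{d\overline{\Z}_S}$ for every $k\ge 1$; as $(d,d-1)=1$, the ideal $d\overline{\Z}_S$ is proper, so $F_n'(c_0)\ne 0$. This handles all $n\ge 2$ at once, without any period reduction, discriminant formula, or rigidity argument. Your proposed Thurston-rigidity attack might be made to work, but it is vastly heavier machinery than the problem requires.
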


\begin{proof} We first argue by induction that for all $n\ge 1$, the leading coefficient of $\phi_{(C,n)}(0)$ is $\pm(d-1)^m$ for some $m\ge 1$. When $n=1$, this is clear. Suppose $n\ge 2$, and that $\phi_{(C,n)}(0)$ has leading term $\pm(d-1)^jC^{d^{n-1}}$ for some $j\ge 1$. Then $\pm (d-1)^{jd}$ is the leading coefficient of $(\phi_{(C,n)}(0))^d$. On the other hand, $Cd(\phi_{(C,n)}(0))^{d-1}$ has degree $d^{n-1}(d-1)+1$, which is strictly less than $d^n$, as $n\ge 2$. Hence the leading coefficient of $\phi_{(C,n+1)}(0)$ is $\pm (d-1)^{jd}$. On the other hand, one computes that $\phi_{(C,2)}(0)$ has leading term $-(d-1)^{d-1}c^d$. Therefore by induction, for all $n$, the leading coefficient of $\phi_{(C,n)}(0)$ is $\pm(d-1)^m$, with $m\ge 1$. 

For a finite set $S$ of primes of $\Z$ including the infinite place, $\alpha\in\overline{\Q}$, and $K:=\Q(\alpha)$, we write $\alpha\in\overline{\Z}_S$ if $v_{\frak{p}}(\alpha)\ge 0$ for all primes $\frak{p}$ in $\mathcal{O}_K$ such that $\frak{p}\notin S'$, where $S'$ is the set of places of $K$ lying above $S$. If we let $S$ be the set of primes of $\Z$ dividing $d-1$ along with the infinite place, then any root $c$ of $\phi_{(C,n)}(0)$ is an element of $\overline{\Z}_S$. Thus, if $c$ is a multiple root of $\phi_{(C,n)}(0)$, where $n\ge 2$, then \[0\equiv (\phi_{(C,n)}(0))'\mid_{C=c}\equiv[(\phi_{(C,n-1)}(0))^d-Cd(\phi_{(C,n-1)}(0))^{d-1}+C(d-1)]'\mid_{C=c}\equiv -1\Mod{d\overline{\Z}_S},\] and we reach a contradiction. \end{proof}

For a fixed $d\ge 3$, define $\Phi_{(C,n)}(x)=\prod_{m\mid n}(\phi_{(d,C)}^{m}(x)-x)^{\mu(n/m)}\in \Z[C,x]$, and $\Phi_{(c,n)}(x)=\prod_{m\mid n}(\phi_{(d,c)}^m(x)-x)^{\mu(n/m)}\in\Z[x]$. 

\begin{lem}{\label{lem:dynatomic}} The polynomial $\Phi_{(C,n)}(0)\in\Z[C]$ has at least three simple roots in $\overline{\Q}$ for all $n\ge 3$. \end{lem}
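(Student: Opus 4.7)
The plan is to reduce the statement to a short degree computation. First, I would recall that the $n$-th dynatomic polynomial $\Phi_{(C,n)}(x)$ is, by its Möbius-inversion construction, an element of $\Z[C,x]$ that divides $\phi_{(d,C)}^n(x)-x$ in $\Z[C,x]$. Specializing this divisibility at $x=0$ yields
\[
\Phi_{(C,n)}(0)\,\bigm|\,\phi_{(C,n)}(0) \qquad\text{in }\Z[C].
\]
Combining this with Lemma \ref{lem:simpleroots}, which asserts that every root of $\phi_{(C,n)}(0)$ in $\overline{\Q}$ is simple, one concludes that every root of $\Phi_{(C,n)}(0)$ in $\overline{\Q}$ is likewise simple. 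It therefore suffices to prove the purely numerical bound $\deg_C \Phi_{(C,n)}(0)\ge 3$ for all $n\ge 3$.

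To carry out this degree count I would use the fact (established within the proof of Lemma \ref{lem:simpleroots}) that $\deg_C \phi_{(C,m)}(0)=d^{m-1}$ for every $m\ge 1$, together with the additivity of degree across the Möbius product defining the dynatomic polynomial, to write
\[
\deg_C \Phi_{(C,n)}(0)=\sum_{m\mid n}\mu(n/m)\,d^{m-1}.
\]
Singling out the $m=n$ contribution and bounding the remaining terms by their absolute values gives
\[
\deg_C \Phi_{(C,n)}(0)\;\ge\; d^{n-1}-\sum_{m=1}^{n-1}d^{m-1}\;=\;\frac{d^{n-1}(d-2)+1}{d-1}.
\]
For $d\ge 3$ and $n\ge 3$ the right-hand side is at least $5$, and in particular exceeds $3$, which completes the reduction.

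The one point requiring care is the input that $\Phi_{(C,n)}(x)$ really is a polynomial in $\Z[C,x]$ (not merely a rational function); only then are the divisibility step and the degree-additivity step legitimate. This is a classical property of dynatomic polynomials, and I would simply cite a standard reference such as Silverman's \emph{Arithmetic of Dynamical Systems}, Chapter~4. Apart from that, there is no genuine obstacle: the argument is a brief specialization plus an elementary geometric-series inequality, and leans only on Lemma \ref{lem:simpleroots}.
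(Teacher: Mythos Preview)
Your argument is correct and is essentially the paper's own proof: both deduce simplicity of the relevant roots from Lemma~\ref{lem:simpleroots} and then carry out an elementary degree (equivalently, root-count) estimate based on $\deg_C\phi_{(C,m)}(0)=d^{m-1}$. The only cosmetic difference is that the paper counts ``roots of $\phi_{(C,n)}(0)$ not shared with any $\phi_{(C,m)}(0)$ for a proper divisor $m\mid n$,'' whereas you invoke the divisibility $\Phi_{(C,n)}(0)\mid\phi_{(C,n)}(0)$ and the M\"obius degree formula---the content is the same.
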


\begin{proof} Each $\phi_{C,n}(0)\in\Z[C]$ has degree $d^{n-1}$.  By Lemma \ref{lem:simpleroots}, we see that in order to guarantee that $\Phi_{C,n}(0)$ has at least three simple roots, it suffices for the total number of roots of $\phi_{C,n}(0)$ to be at least three more than the sum of the number of roots of $\phi_{C,m}(0)$ for all proper divisors $m$ of $n$. 

Let $a_1,a_2,\dots a_r$ be the proper divisors of $n$, with $a_i<a_{i+1}$ for all $i$. Then, since $d\ge 3$, we have \[d^{a_1-1}+d^{a_2-1}+\dots+d^{a_r-1}<d^{a_r}\le d^{n-2}<d^{n-1}-3\] for all $n\ge 3$.\end{proof}

As previously, let $K$ be a number field, with $\mathcal{O}_K$ its ring of integers. Let $\alpha\in \mathcal{O}_K$, and $\varphi(x)\in\mathcal{O}_K[x]$.

\begin{definition*} Let $(\varphi^n(\alpha))=\frak{p}_1^{e_1}\cdots\frak{p}_r^{e_r}\subset \mathcal{O}_K$, where the $\frak{p}_i$ are prime ideals in $\mathcal{O}_K$.  If $n\ge 2$, then the \tu{primitive part} of $\varphi^n(\alpha)$ is the subproduct $\frak{p}_{i_1}^{e_{i_1}}\cdots \frak{p}_{i_s}^{e_{i_s}}\supset \frak{p}_1^{e_1}\cdots\frak{p}_r^{e_r}$, where the $\frak{p}_{i_j}$ do not divide any ideal in the set $\{(\varphi^m(\alpha))\}_{m=1}^{n-1}$, and $v_{\frak{p}_{i_j}}(\varphi^n(\alpha))=e_{i_j}$. \end{definition*}

\begin{definition*} A \tu{rigid divisibility sequence} is a sequence $b_n$ of elements of $K$ such that both of the following conditions hold:
\begin{enumerate}[topsep=5pt, partopsep=5pt, itemsep=2pt]
\item[\textup{(1)}] for all primes $\frak{p}$ of $\mathcal{O}_K$, $v_{\frak{p}}(b_n)>0$ implies $v_{\frak{p}}(b_{mn})=v_{\frak{p}}(b_n)$ for all $m\ge 1$
\item[\textup{(2)}] $v_{\frak{p}}(b_n),v_{\frak{p}}(b_m)\ge e$ implies $v_{\frak{p}}(b_{\tu{gcd}(m,n)})\ge e$.
\end{enumerate} \end{definition*} Conditions (1) and (2) together in fact imply that if $v_{\frak{p}}(b_n)=e>0$ for some $n$, then for any $m\ne n$, either $v_{\frak{p}}(b_m)=0$ or $v_{\frak{p}}(b_m)=e$.

\begin{lem}{\label{lem:primpart}} Suppose $\varphi(0),\varphi^2(0),\dots$ forms a rigid divisibility sequence in $\mathcal{O}_K$. Then the ideal generated by $\prod_{m\mid n}(\varphi^m(0))^{\mu(n/m)}$ in $\mathcal{O}_K$ equals the primitive part of $\varphi^n(0)$. \end{lem}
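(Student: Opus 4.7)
The plan is to verify the equality of these (fractional) ideals by comparing $\frak{p}$-adic valuations one prime at a time in $\mathcal{O}_K$. Write $b_m := \varphi^m(0)$ and $e_m := v_{\frak{p}}(b_m)$. If $\frak{p}$ does not divide any $b_m$ with $m \mid n$, then both sides have $\frak{p}$-adic valuation $0$ trivially, so I may assume some such divisibility occurs.

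The first substantive step is to extract the \emph{rank of apparition} picture from the rigid divisibility axioms. Let $n_0$ be the least positive integer with $e_{n_0} > 0$, and put $e := e_{n_0}$. The remark following the definition of a rigid divisibility sequence gives $e_k \in \{0, e\}$ for every $k \ge 1$; condition (1) forces $e_{k n_0} = e$ for all $k \ge 1$; and condition (2) shows that any $k$ with $e_k > 0$ satisfies $n_0 \mid k$, since otherwise $\gcd(k, n_0)$ would be a strictly smaller positive index with $e_{\gcd(k,n_0)} \ge e$, contradicting minimality of $n_0$. Thus $\{k \ge 1 : e_k > 0\} = n_0 \mathbb{Z}^+$ with constant valuation $e$ on that set.

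With this structure in hand I would compute both sides. For the left-hand side,
\[ v_{\frak{p}}\Bigl(\prod_{m \mid n} b_m^{\mu(n/m)}\Bigr) = \sum_{m \mid n} \mu(n/m)\, e_m = e \sum_{\substack{m \mid n \\ n_0 \mid m}} \mu(n/m). \]
If $n_0 \nmid n$ the indexing set is empty and the total is $0$. If $n_0 \mid n$, substituting $m = n_0 m'$ rewrites the sum as $e \sum_{m' \mid n/n_0} \mu((n/n_0)/m')$, which by the standard identity $\sum_{d \mid N} \mu(d) = [N = 1]$ equals $e$ when $n_0 = n$ and $0$ otherwise. On the right-hand side, the definition of the primitive part includes $\frak{p}$ with valuation $e_n = e$ precisely when $\frak{p} \mid b_n$ but $\frak{p} \nmid b_m$ for all $m < n$, which is exactly the condition $n_0 = n$; otherwise $\frak{p}$ is omitted entirely. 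The two valuations therefore agree at every $\frak{p}$, so the ideals coincide.

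The main obstacle, such as it is, lies only in cleanly translating rigid divisibility into the rank-of-apparition picture above; once that is done, the equality reduces to the elementary Möbius identity on $N = n/n_0$, and is really just the ideal-theoretic analogue of the classical factorization that isolates primitive parts in divisibility sequences such as $a^n - b^n$.
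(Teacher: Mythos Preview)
Your proof is correct and follows essentially the same approach as the paper: both extract from the rigid divisibility axioms the rank-of-apparition structure (the set of $m$ with $\frak{p}\mid\varphi^m(0)$ is $n_0\Z^+$ with constant valuation $e$), and both conclude via the M\"obius identity. The only difference is presentational: the paper packages the argument as $(\varphi^n(0))=\prod_{m\mid n}a_m$ followed by abstract M\"obius inversion on ideals, whereas you carry out the same M\"obius computation explicitly at the level of $\frak{p}$-adic valuations.
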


\begin{proof} Fix $n>1$, and let $\frak{p}$ be any prime ideal dividing one or more elements of $\{(\varphi^m(0))\}_{m=1}^n$. Let $l(\frak{p})$ be the least $m$ such that $\frak{p}\mid (\varphi^m(0))$, and let $e(\frak{p})=v_{\frak{p}}(\varphi^m(0))$. Define $a_m:=\prod_{l(\frak{p})=m} \frak{p}^{e(\frak{p})}$, which is the primitive part of $\varphi^m(0)$.  Since the $a_n$ are coprime, we have $(\varphi^n(0))=\prod_{m\mid n} a_m$. By M{\"o}bius inversion, this gives $a_n=\prod_{m\mid n} (\varphi^m(0))^{\mu(n/m)}$. \end{proof}

We finally prove Theorem \ref{thm:abcOdoni}.

\begin{proof}[Proof of Theorem \ref{thm:abcOdoni}] Fix a $d$ satisfying the hypothesis of the theorem, and let $c\in  \mathcal{B}_d$. The finite critical points of $\phi_{(d,c)}$ are $a:=c(d-1)$ and 0, of multiplicities 1 and $d-2$ respectively. Moreover, $\phi_{(d,c)}(0)=a$. By Lemma \ref{lem:orb}, we know that the orbit of $0$ (and thus the orbit of $a$) under $\phi_{(d,c)}$ is infinite. Since $\phi_{(d,c)}(x)$ has no linear term, it is easy to check that the orbit of 0 under $\phi_{(d,c)}(x)$ forms a rigid divisibility sequence (compare Lemma 5.3 of \cite{Jonesdensity}).

Let $n\ge 2$. Since $\phi_c^n(0)=\phi_c^{n-1}(a)$, any primitive prime divisor $p_n$ of $\phi_c^n(a)$ does not occur as a divisor of any $\phi_c^m(0)$, $m\le n$. We also have $c\mid \phi_c^n(a)$ for all $n$, so any prime divisor of $c$ cannot be a primitive prime divisor of $\phi_c^n(a)$ for any $n\ge 2$. By definition, $\phi_c^{n-1}(c(d-1))=\phi_c^{n-1}(a)$ must be coprime to any primitive prime divisor of $\phi_c^n(a)$. Hence by Proposition \ref{prop:LNV}, to show that $\Gal(M_i/\Q(\alpha_i))$ contains a transposition for any $i$ corresponding to a root $\alpha_i$ of $\phi_c^{n-1}(x)$, it suffices to prove the existence of a primitive prime divisor $p_n$ of $\phi_c^n(a)$, such that $p_n\nmid d$, and $p_n$ divides $\phi_c^n(a)$ to odd multiplicity. (Here we are using the same notation as in Section \ref{section:ram}; for a fixed $d$, we sometimes write $K_n(\phi_c)$ and $\widehat{M}_i(\phi_c)$ to emphasize the dependence of the various $K_n$ on the choice of $c$.)

We first use the Vojta (or Hall-Lang) conjecture to bound the index $n$ such that $K_n/K_{n-1}$ can fail to be maximal, in such a way that this bound is uniform in $c$ (see \cite{Stoll} for a statement of this conjecture for $d\ge 5$, and ~\cite[IX.7]{Silverman} for $d\le 4$). 

For $f\in \overline{\Q}[x]$, let $h(f)$ be the maximum of the logarithmic heights of the coefficients of $f$.

\begin{conj}\label{conj:Vojta} For each $d\ge 3$, there exist constants $C_1=C_1(d)$ and $C_2=C_2(d)$ such that for all $f\in\Z[x]$ of degree $d$ with $\tu{disc}(f)\ne 0$, then if $x,y\in\Z$ satisfy $y^2=f(x)$, then \[h(x)\le C_1(d)\cdot h(f)+C_2(d).\]  \end{conj}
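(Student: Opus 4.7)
The statement is a formulation of Vojta's conjecture specialized to integral points on the hyperelliptic curve $C_f : y^2 = f(x)$; for $d \le 4$, where the genus is at most one, it coincides with the Hall--Lang conjecture for elliptic curves. Since $\tu{disc}(f) \neq 0$, the curve $C_f$ is smooth of positive genus $g = \lfloor (d-1)/2 \rfloor$, so Siegel's theorem (or Faltings' theorem when $g \ge 2$) already forces the integral points to form a finite set. What this statement strengthens is the quantitative uniformity: a linear bound on $h(x)$ in terms of $h(f)$, with constants depending only on $d$ rather than on the coefficients of $f$.

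The natural plan of attack is to apply Vojta's main Diophantine inequality to $C_f$ equipped with the divisor $D$ supported at infinity, and then to track the $f$-dependence carefully. Vojta's conjecture predicts
\[
h(P) \le (1 + \varepsilon)\, h_{K_{C_f} + D}(P) + O_\varepsilon(1)
\]
for integral points $P = (x, y)$ on $C_f$. The crux is then to extract the $f$-dependence of the error term: one uses that $K_{C_f}$ and $D$ are explicitly expressible in terms of the coefficients of $f$ on the standard hyperelliptic model, so that the right-hand side becomes linear in $h(f)$. Stoll's reduction \cite{Stoll} carries out exactly this kind of passage for $d \ge 5$, showing Conjecture \ref{conj:Vojta} follows from Vojta's main conjecture; for $d \in \{3,4\}$ one instead appeals to the Hall--Lang conjecture directly, where the best partial progress via Baker's method \cite{Silverman} gives bounds polynomial in $H(f)$ but with exponents far exceeding what Conjecture \ref{conj:Vojta} predicts.

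The hard part, and the actual main obstacle, is that Vojta's conjecture is itself a deep open problem. Effective versions of Siegel's theorem are unknown, and even the $abc$ conjecture---which implies many cases of Vojta---is out of reach. Consequently, no proof can currently be proposed: the role of Conjecture \ref{conj:Vojta} in this paper is not to be established, but to be assumed as a working hypothesis. It feeds directly into the proof of Theorem \ref{thm:abcOdoni}, where, combined with Lemma \ref{lem:orb} and the lower bound (\ref{lowerbound}) on $H(\phi_c^n(0))$, it will bound uniformly in $c$ the indices $n$ at which the condition $\phi_c^n(a) = k y^2$ can occur, thereby controlling the finite list of levels where $K_n/K_{n-1}$ might fail to be maximal.
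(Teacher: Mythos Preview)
Your assessment is correct: the statement is a conjecture, not a theorem, and the paper offers no proof of it. It is introduced inside the proof of Theorem~\ref{thm:abcOdoni} precisely as a hypothesis to be assumed (with the references to \cite{Stoll} for $d\ge 5$ and \cite{Silverman} for $d\le 4$ that you cite), and is then applied exactly as you describe---to bound, uniformly in $c$, the indices $n$ at which $K_n/K_{n-1}$ could fail to be maximal. There is nothing to compare; your account of its role matches the paper.
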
 \vspace{5mm} Fix $n\ge 3$, and let $K_n/\widehat{M}_i$ correspond to some root $\alpha_i$ of $\phi_c^{n-1}(x)$. From the inequality in (\ref{lowerbound}), we have $H(\phi_c^k(a))\ge a^{{(d-1)}^{k-3}}$ for all $k\ge 3$, so \[h(\phi_c^k(a))\ge (d-1)^{k-3}h(a).\] On the other hand, if $K_n/\widehat{M}_i$ does not contain a transposition, i.e., if $\phi_c^n(a)$ does not have an odd multiplicity primitive prime divisor outside $d$, then by the observation made in $(\ref{refinement})$, $\phi_c(\phi_c^{n-1}(a))=d_ny^2$, where \begin{equation}{\label{supp}}\tu{Supp}(d_n)\subset \tu{Supp}(d\cdot\phi_c(a)\cdots\phi_c^{\lfloor{\frac{n}{2}}\rfloor}(a)).\end{equation} Thus $\phi_c^{n-1}(a)$ is the $x$-coordinate of an integral point on the curve \begin{equation}{\label{diagcurve1}} d_ny^2=\phi_c(x)\end{equation} or, equivalently, on the curve \begin{equation}{\label{diagcurve2}} y^2=d_n\phi_c(x).\end{equation} As $h(d_n\phi_c(x))>0$ for all $c\in\mathcal{B}_d$, taking $f(x)=d_n\phi_c(x)$ in Conjecture \ref{conj:Vojta} gives \begin{equation}{\label{Vojtabound1}} h(\phi_c^{n-1}(a))\le C_d\cdot (h(d_n)+h(\phi_c(x))\end{equation} for some constant $C_d>0$ that is independent of the choice of $c\in\mathcal{B}_d$.

From $(\ref{supp})$, we have $h(d_n)\le h(d)+h(\phi_c(a))\cdots+h(\phi_c^{\lfloor{\frac{n}{2}}\rfloor}(a))$. Moreover, it is easy to see that $h(\phi_c^m(a))\le d^m\cdot h(a)$, which yields \begin{equation}{\label{upperbound}}h(\phi_c(a))+h(\phi_c^2(a))+\cdots+h(\phi_c^{\lfloor{\frac{n}{2}}\rfloor}(a))\le dh(a)+d^2h(a)+\cdots+d^{\lfloor{\frac{n}{2}}\rfloor}h(a)<h(a)d^{\lfloor{\frac{n}{2}}\rfloor+1}.\end{equation} 
This inequality, along with $(\ref{Vojtabound1})$, implies \begin{equation}{\label{Vojtabound2}}h(\phi_c^{n-1}(a))< C_d\cdot (h(d)+d^{\lfloor{\frac{n}{2}}\rfloor+1}h(a)+h(cd)).\end{equation} 
Combining (\ref{lowerbound}) and (\ref{Vojtabound2}), and simplifying, we arrive at \[(d-1)^{n-4}< C_d\cdot(h(d)+d^{\lfloor{\frac{n}{2}}\rfloor+1}+2).\] This relation is clearly violated for large enough $n$.  In addition, the bound $N$ on $n$ does not depend on the choice of parameter $c\in  \mathcal{B}_d$. 

Therefore when $n>N$, we know that for each $i$ corresponding to a root $\alpha_i$ of $\phi_c^{n-1}$, the extension $M_i/\Q(\alpha_i)$ contains a transposition by Proposition \ref{prop:LNV}. In the case where $d$ is prime, we conclude that $M_i/\Q(\alpha_i)\cong S_d$, and so Proposition \ref{prop:max} implies that $K_n/K_{n-1}$ is maximal for all $n>N$. This proves that $G_{\Q}(\phi_{(d,c)})\le \tu{Aut}(T_{\infty})$ has finite index in that case. 

Now let $2\le n\le N$. By Lemma \ref{lem:primpart}, $\phi_c^n(a)=\phi_c^{n+1}(0)$ has an odd multiplicity primitive prime divisor outside $d$ if and only if $\Phi_{c,n+1}(0)\ne d_iy^2$ in $\Z$, for any $d_i\mid d$.  By Lemma \ref{lem:dynatomic}, we know that $\Phi_{C,n}(0)$ has at least three simple roots when $n\ge 3$. Siegel's theorem then implies that there are finitely many $c\in\Z$ such that $\Phi_{c,n}(a)=\Phi_{c,n+1}(0)$ is of the above form. Thus, for all sufficiently large $c\in  \mathcal{B}_d$, Proposition \ref{prop:LNV} implies that for each $2\le n\le N$, the associated extensions $M_i/\Q(\alpha_i)$ contain transpositions. If $d$ is prime, it follows that for these $c\in  \mathcal{B}_d$, $K_n(\phi_c)/K_{n-1}(\phi_c)$ is maximal for all $n\ge 2$.

In case (2), note that since $c\in  \mathcal{B}_d$, and since the orbit of 0 under $\phi_{(d,c)}$ forms a rigid divisibility sequence, $v_q(\phi_c^n(0))=v_q(d-1)$ for all $n\ge 1$. Thus $(v_q(\phi_c^n(0)),d-1)=1$ for all $n\ge 1$. Fixing any $n\ge 2$, and letting $\alpha_i$ be a root of $\phi_c^{n-1}(x)$, $N_{\Q(\alpha_i)/\Q}(c(d-1)-\alpha_i)=\phi_c^{n-1}(c(d-1))=\phi_c^n(0)$. If $\frak{p}$ is a prime of $\mathcal{O}_{\Q(\alpha_i)}$ lying above $q$, then, $v_{\frak{p}}(c(d-1)-\alpha_i)\mid v_q(\phi^n(0))$. Thus $(v_{\frak{p}}(c(d-1)-\alpha_i),d-1)=1$. We then apply Theorem \ref{thm:2trans} to conclude that for any $n\ge 1$, $\Gal(M_i/\Q(\alpha_i))$ is doubly transitive. When $n>N$, $\Gal(M_i/\Q(\alpha_i))$ contains a transposition; thus, $\Gal(M_i/\Q(\alpha_i))\cong S_d$. From Proposition \ref{prop:max}, it follows that $K_n/K_{n-1}$ is maximal for all $n>N$. Therefore $G_{\Q}(\phi_{(d,c)})\le \tu{Aut}(T_{\infty})$ is a finite index subgroup for all $c\in  \mathcal{B}_d$. As previously, we also observe that for any sufficiently large $c\in  \mathcal{B}_d$, $K_n(\phi_c)/K_{n-1}(\phi_c)$ is maximal for all $n\ge 2$.

Finally, from the discriminant formula in (\ref{disctrinomial}) for $\phi_{(d,c)}$, along with Proposition \ref{prop:LNV} and Theorem \ref{thm:2trans}, one sees that $G_1(\phi_{(d,c)})\cong S_d$ so long as $(d-1)^{d-2}c^d-1$ is not a square in $\Z$ outside of $\tu{Supp}(d)$. But any curve over $\Q$ of the form \begin{equation}{\label{G1diag}}d_iy^2=kx^d-1\end{equation} for $d_i\mid d$ has at most finitely many solutions $(x,y)\in\Z^2$. We have thus shown that for each $n\ge 1$, there are finitely many $c\in  \mathcal{B}_d$ such that $K_n(\phi_c)/K_{n-1}(\phi_c)$ is not maximal. Excluding these parameters, any sufficiently large value of $c\in  \mathcal{B}_d$ must satisfy $G_{\Q}(\phi_c)=\tu{Aut}(T_{\infty})$. This completes the proof of Theorem \ref{thm:abcOdoni}. \end{proof}

 \end{document}